\documentclass[10pt]{article}
\textwidth= 5.00in
\textheight= 7.4in
\topmargin = 30pt
\evensidemargin=0pt
\oddsidemargin=55pt
\headsep=17pt
\parskip=.5pt
\parindent=12pt
\font\smallit=cmti10

\usepackage{amssymb,latexsym,amsmath,epsfig,amsthm,url,mathrsfs,tikz,tkz-graph,caption,mathrsfs}
\usetikzlibrary{arrows}
\usetikzlibrary{shapes}
\usetikzlibrary{automata}

\makeatletter

\renewcommand\section{\@startsection {section}{1}{\z@}
{-30pt \@plus -1ex \@minus -.2ex}
{2.3ex \@plus.2ex}
{\normalfont\normalsize\bfseries\boldmath}}

\renewcommand\subsection{\@startsection{subsection}{2}{\z@}
{-3.25ex\@plus -1ex \@minus -.2ex}
{1.5ex \@plus .2ex}
{\normalfont\normalsize\bfseries\boldmath}}

\renewcommand{\@seccntformat}[1]{\csname the#1\endcsname. }

\makeatother

\newtheorem{theorem}{Theorem}

\newtheorem{corollary}{Corollary}

\theoremstyle{definition}
\newtheorem{definition}{Definition}

\newtheorem{example}{Example}

\begin{document}

\begin{center}
\uppercase{\bf A Multigraph Characterization of Permutiple Strings}
\vskip 20pt
{\bf Benjamin V. Holt}\\
{\smallit Department of Mathematics, Southwestern Oregon Community College, Oregon}\\
{\tt benjamin.holt@socc.edu}\\
\vskip 10pt
\end{center}
\vskip 20pt

\vskip 30pt

\centerline{\bf Abstract}

\noindent
A permutiple is a natural number whose representation in some base is an integer multiple of a number whose representation has the same collection of digits. A previous paper utilizes a finite-state-machine construction and its state graph to recognize permutiples and to generate new examples. Permutiples are associated with walks on the state graph which necessarily satisfy certain conditions. However, the above effort does not provide sufficient conditions for the existence of permutiples. In this paper, we provide such a condition which we will state using the language of multigraphs.

\pagestyle{myheadings}
\thispagestyle{empty}
\baselineskip=12.875pt
\vskip 30pt

\section{Introduction}
Let $b > 1$ be an integer. A
permutiple is a natural number which is an integer multiple of some permutation of its
digits in base $b$ \cite{holt_3}. Specific cases of digit-permutation problems include cyclic permutations of digits \cite{guttman,kalman}, for example, $714285 = 5 \cdot 142857,$ as well as digit reversals \cite{hoey, holt_1,holt_2, kendrick_1,sloane, web_wil, young_1,young_2}, which include $87912=4\cdot 21978$ and $98901 = 9 \cdot 10989.$ Numbers which are multiples of cyclic permutations of their digits are known as {\it cyclic numbers} \cite{guttman}. Numbers which are multiples of their reversals are known by several names, including {\it palintiples} \cite{hoey, holt_1,holt_2}, {\it reverse multiples} \cite{kendrick_1, sloane, young_1, young_2}, and {\it reverse divisors} \cite{web_wil}.

Another specific case of permutiples worth mentioning is a paper by Qu and Curran \cite{qu} which considers base-$b$ numbers which are multiples of $(b^{b-1}-1)/(b-1)^2,$ whose representation is all consecutive base-$b$ digits 1 through $b-1.$ Two base-10 examples include $987654312=8 \cdot 123456789$ and $493827156=4 \cdot 123456789.$

Some works place no restrictions on the type of permutation which may arise in an arbitrary base and multiplier \cite{holt_3,holt_4,holt_5, holt_6}. In \cite{holt_3,holt_4}, the author describes methods for finding new examples of permutiples from the digits of known examples. For instance, using these methods, we are able to find new examples $79128=4\cdot 19782$ and $78912=4\cdot 19728$ from the known example mentioned above, $87912=4\cdot 21978$. Other works by the author \cite{holt_5,holt_6} utilize graph-theoretical and finite-state-machine methods to produce examples with any suitable number of digits. These methods are modifications of the work of Hoey \cite{hoey} and Sloane \cite{sloane}, and both apply what are ultimately similar techniques to the digit-reversal problem. The former uses finite-state-machine methods, while the latter uses a graph-theoretical approach. Another example of applying finite-state-machine techniques to base-dependent integer sequences is a paper by Faber and Grantham \cite{faber} which finds pairs of integers whose sum is the reverse of their product. For example, $3$ and $24$ have this property since $3+24=27$ and $3\cdot 24=72.$

The methods developed in \cite{holt_5} use a finite-state-machine construction, known as the {\it Hoey-Sloane machine}, and its state graph, known as the {\it Hoey-Sloane graph}, which describes a collection of possible base-$b$ multiplications by a single-digit multiplier $n.$ The states of the Hoey-Sloane machine are the possible carries which may occur when performing digit-preserving multiplication, and the input alphabet consists of ordered pairs representing directed edges from the {\it mother graph}, which catalogs how digits may be permuted in a single-digit multiplication. The initial state of the machine must be zero since the carry corresponding to the first digit in any single-digit multiplication is defined to be zero \cite{holt_3}. Similarly, the terminal digit of a multiplication requires the final (accepting) state to be zero, otherwise there would be digits beyond the terminal digit. It is shown in \cite{holt_5} that input strings which represent permutiples, known as {\it permutiple strings}, consist of ordered pairs which make up cycles on the mother graph. In this context, permutiples correspond to a sequence of state transitions (walks on the Hoey-Sloane graph) beginning and ending with the zero state, where each transition is induced by a collection of edges which is a multiset union of cycles of the mother graph.

In this paper, we continue the work described above to establish sufficient conditions for the existence of permutiple strings, yielding a multigraph characterization of permutiple strings. We also apply the results to several examples, which describe how to create novel permutiple examples of any suitable length.

\section{Summary of Previous Work}

What follows is a summary of several results and defnitions from previous works \cite{holt_3,holt_5} that we will use in this article.

\subsection{Basic Definitions and Results}

The notation $(d_k,d_{k-1},\ldots,d_0)_b$ is used to represent $\sum_{j=0}^{k}d_j b^j,$ where $0\leq d_j<b$ for all $0\leq j \leq k.$ We may now define what it means to be a permutiple number.

\begin{definition}[\cite{holt_3}]
Let $1<n<b$ be a natural number, and let $\sigma$ be a permutation on $\{0,1,2,\ldots, k\}$.
We say that $(d_k, d_{k-1},\ldots, d_0)_b$  is an $(n,b,\sigma)$-\textit{permutiple} provided
\[
(d_k,d_{k-1},\ldots,d_1, d_0)_b=n(d_{\sigma(k)},d_{\sigma(k-1)},\ldots, d_{\sigma(1)}, d_{\sigma(0)})_b.
\]
When $\sigma$ itself is not important to the discussion, we will refer to $(d_k,d_{k-1},\ldots,d_0)_b$ as simply an $(n,b)$-permutiple. The collection of all base-$b$ permutiples having multiplier $n$ will be referred to as {\it $(n,b)$-permutiples}.
\end{definition}

The next result relates the digits and carries of a permutiple.

\begin{theorem}[\cite{holt_3}]
Let $(d_k, d_{k-1},\ldots, d_0)_b$ be an $(n,b,\sigma)$-permutiple, and let $c_j$ be the $j^{th}$carry. Then,
\[
b c_{j+1}-c_j=nd_{\sigma(j)}-d_{j}
\]
for all $0\leq j \leq k$.
\label{digits_carries_1}
\end{theorem}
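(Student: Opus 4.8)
The plan is to recognize the claimed recurrence as the governing relation of ordinary single-digit long multiplication, and then to obtain it by a telescoping argument from the defining permutiple equation. First I would fix notation by writing the permutiple condition as
\[
\sum_{j=0}^{k} d_j b^j = n \sum_{j=0}^{k} d_{\sigma(j)} b^j,
\]
and recall that the $j$th carry $c_j$ is the integer carried into position $j$ when the product $n(d_{\sigma(k)},\ldots,d_{\sigma(0)})_b$ is formed digit by digit, so that $c_0 = 0$ and, because the permutiple has the same number of digits $k+1$ as its permutation, there is no overflow carry, i.e.\ $c_{k+1}=0$.

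The engine of the proof is an explicit closed form for the carries. I would define, for $0 \le j \le k+1$,
\[
c_j = b^{-j}\sum_{i=0}^{j-1}\bigl(n d_{\sigma(i)} - d_i\bigr) b^i,
\]
the empty sum giving $c_0 = 0$, and the full permutiple equation giving $c_{k+1} = b^{-(k+1)}\cdot 0 = 0$ exactly as required. With this form in hand the recurrence is immediate: subtracting the partial sum defining $c_j$ from $b$ times the partial sum defining $c_{j+1}$ leaves only the single term of index $j$, namely
\[
b c_{j+1} - c_j = b^{-j}\bigl(n d_{\sigma(j)} - d_j\bigr) b^{j} = n d_{\sigma(j)} - d_j,
\]
which is the asserted identity for every $0 \le j \le k$.

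What remains, and where I expect the real work to lie, is verifying that the quantities $c_j$ defined above are genuinely the carries of the multiplication, that is, that they are nonnegative integers and that the digit emitted at position $j$ is precisely $d_j$. I would establish this by induction on $j$: assuming $c_j$ is a nonnegative integer, the relation $n d_{\sigma(j)} + c_j = d_j + b c_{j+1}$ together with the digit bounds $0 \le d_j < b$ and $0 \le d_{\sigma(j)} < b$ forces $c_{j+1} = \lfloor (n d_{\sigma(j)} + c_j)/b\rfloor$ to be a nonnegative integer and $d_j$ to be the corresponding remainder, so the closed form reproduces exactly the standard long-multiplication process. The only subtlety to watch is the top of the range: once the induction reaches $j=k$, the vanishing of $c_{k+1}$ must be confirmed directly from the permutiple equation rather than from the digit bounds, which is precisely the point at which the hypothesis that the two representations share the same length is used.
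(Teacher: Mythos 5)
The paper itself offers no proof of this theorem; it is imported from \cite{holt_3}, where it rests on the definition of the carries in long multiplication: at position $j$ the algorithm computes $nd_{\sigma(j)}+c_j$ and splits it as the emitted digit plus $b$ times the carry out, and since the product equals $(d_k,\ldots,d_0)_b$, uniqueness of base-$b$ representation forces the emitted digit to be $d_j$; rearranging gives the identity. Your plan runs in the opposite direction---define $c_j$ by a closed formula, telescope, then argue the formula reproduces the true carries---and the telescoping half is correct: for the quantities you defined, $c_0=0$, $c_{k+1}=0$ via the permutiple equation, and $bc_{j+1}-c_j=nd_{\sigma(j)}-d_j$ all check out.

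The gap is in the identification step, precisely where you located the real work. Your induction asserts that the relation $nd_{\sigma(j)}+c_j=d_j+bc_{j+1}$ together with the digit bounds forces $c_{j+1}=\lfloor(nd_{\sigma(j)}+c_j)/b\rfloor$ to be a nonnegative integer. It does not: your $c_{j+1}$ is a priori only a rational number, and from $x=d+by$ with $x$ a nonnegative integer and $0\le d<b$ one cannot conclude $y\in\mathbb{Z}$ (take $b=10$, $x=7$, $d=3$, $y=2/5$). The uniqueness clause of the division algorithm requires integrality of the quotient as a hypothesis; it cannot deliver it, so the induction never gets started. The missing observation is that integrality of every $c_j$ follows directly, with no induction, from the permutiple equation: since $\sum_{i=0}^{k}\bigl(nd_{\sigma(i)}-d_i\bigr)b^i=0$, the partial sum $b^jc_j=\sum_{i=0}^{j-1}\bigl(nd_{\sigma(i)}-d_i\bigr)b^i$ equals $-\sum_{i=j}^{k}\bigl(nd_{\sigma(i)}-d_i\bigr)b^i$, every term of which is divisible by $b^j$, whence $c_j=-\sum_{i=j}^{k}\bigl(nd_{\sigma(i)}-d_i\bigr)b^{i-j}$ is an integer. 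With integrality in hand, your division-algorithm step becomes legitimate, nonnegativity follows because $nd_{\sigma(j)}+c_j\ge 0$, and the identification with the carries of the actual multiplication goes through as you intended.
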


The carries in any permutiple multiplication are always less than the multiplier.

\begin{theorem}[\cite{holt_3}] \label{carries}
Let $(d_k, d_{k-1},\ldots, d_0)_b$ be an $(n,b,\sigma)$-permutiple, and let $c_j$ be the
$j^{th}$ carry. Then, $c_j\leq n-1$ for all $0 \leq j \leq k$.
\end{theorem}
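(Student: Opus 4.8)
The plan is to argue by induction on $j$, using the digit--carry recurrence from Theorem~\ref{digits_carries_1} as the engine. Rearranging that identity gives
\[
b\,c_{j+1} = n\,d_{\sigma(j)} + c_j - d_j,
\]
which is just the familiar statement that, at position $j$, the quantity $n\,d_{\sigma(j)}$ plus the incoming carry $c_j$ splits into an output digit $d_j$ and an outgoing carry $c_{j+1}$. The carries are nonnegative integers produced by the multiplication, and the process begins with $c_0 = 0$, so the base case $c_0 = 0 \leq n-1$ is immediate because $n > 1$.

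For the inductive step, I would assume $c_j \leq n-1$ and bound the right-hand side of the rearranged recurrence from above. Since $d_{\sigma(j)} \leq b-1$, $c_j \leq n-1$ by the inductive hypothesis, and $d_j \geq 0$, we obtain
\[
b\,c_{j+1} = n\,d_{\sigma(j)} + c_j - d_j \leq n(b-1) + (n-1) = nb - 1.
\]
Dividing by $b$ yields $c_{j+1} \leq n - \tfrac{1}{b} < n$. Because $c_{j+1}$ is an integer, this forces $c_{j+1} \leq n-1$, which advances the induction from $c_j$ to $c_{j+1}$ and thereby establishes the bound for every index in the stated range.

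I do not expect a genuine obstacle here; the argument is a short, self-contained induction driven entirely by the previous theorem together with the elementary digit bounds $0 \le d_j, d_{\sigma(j)} \le b-1$. The one point that requires a moment's care is the final rounding step: the upper bound $nb-1$ is strictly less than $nb$, and it is precisely this strictness that lets us pass from $c_{j+1} < n$ to the integer bound $c_{j+1} \leq n-1$. It is also worth confirming the indexing convention so that the base case $c_0 = 0$ is consistent with the statement's range $0 \leq j \leq k$.
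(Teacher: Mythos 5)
Your proof is correct: the induction via the recurrence of Theorem~\ref{digits_carries_1}, with base case $c_0=0$, the bounds $0\le d_j\le b-1$ and $d_{\sigma(j)}\le b-1$, and the final integrality step passing from $c_{j+1}\le n-\tfrac{1}{b}$ to $c_{j+1}\le n-1$, is sound and covers the whole index range. Note that this paper states Theorem~\ref{carries} without proof, importing it from \cite{holt_3}, so there is no in-paper argument to compare against; yours is the standard induction that establishes this classical bound on multiplication carries, and it is self-contained given Theorem~\ref{digits_carries_1} and the fact that carries are integers.
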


\subsection{Permutiple Graphs and the Mother Graph}

For any permutiple, we may define a directed graph which describes the most essential properties of the digit permutation.

\begin{definition}[\cite{holt_5}]
Let $p=(d_k, d_{k-1},\ldots, d_0)_b$  be an $(n,b,\sigma)$-permutiple. We define a directed graph, called the {\it graph of $p$}, denoted by $G_p,$ to consist of the collection of base-$b$ digits as vertices, and the collection of directed edges $E_p=\left \{\left (d_j,d_{\sigma(j)}\right) |\, 0 \leq j \leq k \right \}.$  A graph $G$ for which there is a permutiple $p$ such that $G=G_p$ is called a {\it permutiple graph}.
\label{class_graph}
\end{definition}

For the remainder of this paper, we may drop the ``directed'' terminology with the understanding that all graphs and multigraphs considered here will have directed edges.

Table \ref{conj_class_table} gives a collection of permutiples with the same graph. Their common graph is shown in Figure \ref{conj_class_graph}.

\begin{center}
 \begin{tabular}{|c|c|}
\hline $(4,10,\sigma)$-Permutiple &  $\sigma$ \\\hline
$(8,7,9,1,2)_{10}=4 \cdot (2,1,9,7,8)_{10}$ &  $\rho$  \\\hline
$(8,7,1,9,2)_{10}=4 \cdot (2,1,7,9,8)_{10}$ & $(1,2)\rho(1,2)$ \\\hline
$(7,9,1,2,8)_{10}=4 \cdot (1,9,7,8,2)_{10}$ & $\psi^{-4}\rho\psi^4$ \\\hline
$(7,1,9,2,8)_{10}=4 \cdot (1,7,9,8,2)_{10}$ & $\psi^{-4}(1,2)\rho(1,2)\psi^4$ \\\hline
\end{tabular}
\captionof{table}{Permutiples with the same graph as $(8,7,9,1,2)_{10}=4 \cdot (2,1,9,7,8)_{10},$ where $\psi=(0,1,2,3,4)$ and $\rho$ is the reversal permutation.}\label{conj_class_table}
\end{center}

\begin{center}
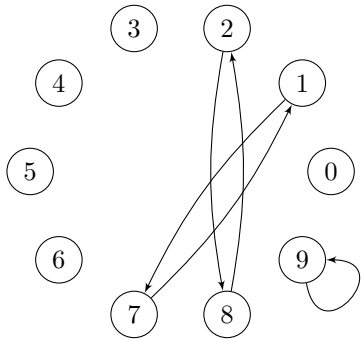

\begin{tikzpicture}
\tikzset{edge/.style = {->,> = latex'}}
\tikzset{vertex/.style = {shape=circle,draw,minimum size=1.5em}}
[xscale=3, yscale=3, auto=left,every node/.style={circle,fill=blue!20}]
\node[vertex] (n0) at (12,5) {$0$};
\node[vertex] (n1) at (11.618,6.17557) {$1$};
\node[vertex] (n2) at (10.618,6.90211) {$2$};
\node[vertex] (n3) at (9.38197,6.90211) {$3$};
\node[vertex] (n4) at (8.38197,6.17557) {$4$};
\node[vertex] (n5) at (8,5.00001) {$5$};
\node[vertex] (n6) at (8.38196,3.82443) {$6$};
\node[vertex] (n7) at (9.38196,3.09789) {$7$};
\node[vertex] (n8) at (10.618,3.09788) {$8$};
\node[vertex] (n9) at (11.618,3.82442) {$9$};
\draw[edge, bend right=10] (n8) to (n2);
\draw[edge, bend right=10] (n2) to (n8);
\draw[edge, bend right=10] (n7) to (n1);
\draw[edge, bend right=10] (n1) to (n7);
\draw[edge] (n9) to[in=0,out=-80, loop, style={min distance=10mm}] (n9);
\end{tikzpicture}
\captionof{figure}{The directed graph which results from taking the collection of ordered pairs $\left\{\left (d_j,d_{\sigma(j)}\right) |\, 0 \leq j \leq 4 \right\}$ from any example in Table \ref{conj_class_table} as edges.}
\label{conj_class_graph}
\end{center}

Permutiple graphs provide a framework for classifying permutiples.

\begin{definition}[\cite{holt_5}]\label{perm_class}
Let $p$ be an $(n,b)$-permutiple with graph $G_p.$ We define the {\it class of $p$} to be the collection $C$ of all $(n,b)$-permutiples $q$ such that $G_q$ is a subgraph of $G_p.$ We also define the graph of the class to be $G_p,$ which we will denote as $G_C$ and will call the {\it graph of $C.$}
\end{definition}

For an $(n,b)$-permutiple $(d_k,\ldots, d_0)_b=n(d_{\sigma(k)},\ldots, d_{\sigma(0)})_b$ it is shown in \cite{holt_5} that $\lambda(d_j+(b-n)d_{\sigma(j)})\leq n-1$ for all $0\leq j\leq k,$ where $\lambda$ gives the least non-negative residue modulo $b.$ This condition puts a restriction on the possible edges of a permutiple graph, and we state it as a theorem.

\begin{theorem}[\cite{holt_5}]\label{possible_edges_mg}
 Let $p=(d_k, d_{k-1},\ldots, d_0)_b$ be an $(n,b,\sigma)$-permutiple with graph $G_p.$ Then, for any edge $(d_j,d_{\sigma(j)})$ of $G_p,$ it must be that $\lambda \left(d_j+(b-n)d_{\sigma(j)}\right)\leq n-1$
for all $0\leq j\leq k,$ where $\lambda$ gives the least non-negative residue modulo $b.$
\end{theorem}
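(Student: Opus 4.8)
The plan is to derive the inequality directly from the two earlier results relating digits and carries. The goal is to show that for each $j$, the quantity $\lambda\left(d_j+(b-n)d_{\sigma(j)}\right)$ equals $c_j$, the $j$th carry, after which Theorem \ref{carries} immediately gives the bound $\leq n-1$.

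First I would start from Theorem \ref{digits_carries_1}, which states $bc_{j+1}-c_j=nd_{\sigma(j)}-d_j$. I would rearrange this to isolate the carry terms: $c_j = nd_{\sigma(j)} - d_j + bc_{j+1}$. The key observation is that I want to relate $c_j$ to the expression $d_j + (b-n)d_{\sigma(j)}$, so I would compute modulo $b$. Reducing the rearranged equation modulo $b$ kills the $bc_{j+1}$ term, giving $c_j \equiv nd_{\sigma(j)} - d_j \pmod{b}$. Now I need to connect $nd_{\sigma(j)}-d_j$ to $d_j+(b-n)d_{\sigma(j)}$: observe that $d_j + (b-n)d_{\sigma(j)} = d_j + bd_{\sigma(j)} - nd_{\sigma(j)} \equiv d_j - nd_{\sigma(j)} \equiv -(nd_{\sigma(j)}-d_j) \equiv -c_j \pmod{b}$.

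This is the point requiring care, and I expect it to be the main obstacle: the congruences above show $d_j+(b-n)d_{\sigma(j)} \equiv -c_j \pmod b$, not $\equiv c_j$. I would need to recheck the sign conventions in Theorem \ref{digits_carries_1} and the definition of the carries to confirm the orientation; most likely the intended identity is $\lambda\left(d_j+(b-n)d_{\sigma(j)}\right)=c_j$ with the carry defined so the signs align (for instance, if the carry equation is read as $bc_{j+1}-c_j = nd_{\sigma(j)}-d_j$ with carries flowing in the direction that makes $d_j + (b-n)d_{\sigma(j)} \equiv c_j$). Once the sign is pinned down, the congruence $d_j+(b-n)d_{\sigma(j)}\equiv c_j \pmod b$ holds, and since $0 \leq c_j$ is a genuine carry satisfying $c_j \leq n-1 < b$ by Theorem \ref{carries}, the value $c_j$ is already the least non-negative residue modulo $b$.

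Therefore $\lambda\left(d_j+(b-n)d_{\sigma(j)}\right)=c_j$, and applying Theorem \ref{carries} yields $\lambda\left(d_j+(b-n)d_{\sigma(j)}\right)\leq n-1$ for all $0\leq j\leq k$, which is exactly the claim. The argument is short because all the substantive work has been done in the two cited theorems; the only real content here is the modular reduction that eliminates the base-$b$ multiple of the outgoing carry and identifies the residue with $c_j$. The verification that $c_j$ lies in the correct residue range, guaranteeing it coincides with $\lambda$ of the expression rather than merely being congruent to it, is the step I would present most carefully.
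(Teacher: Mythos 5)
Your overall strategy---reduce the carry recurrence of Theorem \ref{digits_carries_1} modulo $b$, identify $\lambda\left(d_j+(b-n)d_{\sigma(j)}\right)$ with the carry $c_j$, and invoke Theorem \ref{carries}---is exactly the intended argument; it is the same modular computation this paper itself performs in the proofs of Theorems \ref{unique_transition} and \ref{mg_edge_to_transition}. However, your proof as written does not close, because the ``sign problem'' you flag as the main obstacle is entirely self-inflicted, and you never resolve it: you end by saying the conventions ``most likely'' align and proceed on that hope. That conditional step is the gap.

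Concretely, your rearrangement is wrong. From $bc_{j+1}-c_j=nd_{\sigma(j)}-d_j$, isolating $c_j$ gives
\[
c_j = bc_{j+1}-nd_{\sigma(j)}+d_j,
\]
not $c_j=nd_{\sigma(j)}-d_j+bc_{j+1}$ as you wrote; you dropped the minus sign on $c_j$ when moving it across the equation. Reducing the correct identity modulo $b$ gives $c_j\equiv d_j-nd_{\sigma(j)}\pmod{b}$, and hence
\[
d_j+(b-n)d_{\sigma(j)} = d_j+bd_{\sigma(j)}-nd_{\sigma(j)} \equiv d_j-nd_{\sigma(j)} \equiv c_j \pmod{b},
\]
with the correct sign and no convention to recheck. (This matches the paper's own computations, e.g., ``$c_1\equiv d_1-nd_2\pmod{b}$'' in the proof of Theorem \ref{unique_transition}.) Since carries are non-negative and $c_j\leq n-1<b$ by Theorem \ref{carries}, the integer $c_j$ is the least non-negative residue of $d_j+(b-n)d_{\sigma(j)}$, so $\lambda\left(d_j+(b-n)d_{\sigma(j)}\right)=c_j\leq n-1$. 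With that one-line correction your argument is complete and coincides with the standard proof; your closing observation about needing $0\leq c_j<b$ to upgrade congruence to equality with $\lambda$ is exactly the right point of care.
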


Theorem \ref{possible_edges_mg} enables us to gather all possible edges of a permutiple graph into a single graph to obtain the {\it mother graph}.

\begin{definition}[\cite{holt_5}]\label{mother_graph}
The $(n,b)$-\textit{mother graph}, denoted $M,$ is the graph having all base-$b$ digits as its vertices and the collection of edges $(d_1,d_2)$ satisfying the inequality $\lambda \left(d_1+(b-n)d_{2}\right)\leq n-1.$
\end{definition}

The next result underpins the methods presented in \cite{holt_5}.

\begin{theorem}[\cite{holt_5}]
Let $C$ be any $(n,b)$-permutiple class. Then, $G_C$ is a union of cycles of $M.$
\label{cycle_union}
\end{theorem}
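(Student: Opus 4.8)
The plan is to reduce the statement to the cycle structure of the permutation $\sigma$ and to read off the cycles of $M$ directly from it. Fix a permutiple $p=(d_k,\ldots,d_0)_b$ whose class is $C$, so that $G_C=G_p$ has edge set $E_p=\{(d_j,d_{\sigma(j)}) : 0\le j\le k\}$. By Theorem \ref{possible_edges_mg} every such edge already lies in $M$, so $G_C$ is a subgraph of $M$; it therefore remains only to exhibit the edges of $G_C$ as a union of directed cycles of $M$.

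The key observation is that each cycle of $\sigma$ induces a closed walk in $G_p$. Writing $\sigma$ as a product of disjoint cycles, I would take one such cycle $(i_0\ i_1\ \cdots\ i_{m-1})$, with $\sigma(i_t)=i_{t+1}$ and indices read modulo $m$. The edges contributed by these positions are exactly $(d_{i_t},d_{\sigma(i_t)})=(d_{i_t},d_{i_{t+1}})$, and as $t$ ranges over $0,\ldots,m-1$ they trace the closed walk $d_{i_0}\to d_{i_1}\to\cdots\to d_{i_{m-1}}\to d_{i_0}$ in $G_p$. Since every index $j$ lies in exactly one cycle of $\sigma$, every edge of $G_C$ occurs in one of these closed walks.

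To finish I would invoke the elementary fact that a closed directed walk decomposes into directed cycles: locating the first repeated vertex peels off a simple cycle and leaves a shorter closed walk, so iterating expresses the walk's edge set as a union of cycles. Each cycle produced this way uses only edges of $G_p\subseteq M$ and is thus a cycle of $M$. Collecting the cycles obtained from every cycle of $\sigma$ yields a family of cycles of $M$ whose edge sets together cover precisely the edges of $G_C$, which is the desired conclusion; the degenerate case of a fixed point $\sigma(i_0)=i_0$ simply contributes the loop $d_{i_0}\to d_{i_0}$, itself a one-element cycle.

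The point needing the most care is the precise meaning of ``union of cycles.'' The constituent cycles are permitted to share edges, so I would be careful not to overclaim a balanced-degree (edge-disjoint) decomposition, which can fail once a single edge $(u,v)$ is produced by several positions $j$. It is exactly this latitude that makes the combinatorial argument above sufficient: once Theorem \ref{possible_edges_mg} places the edges in $M$, the permutation structure of $\sigma$ alone forces every edge onto a cycle, so the carry arithmetic of Theorems \ref{digits_carries_1} and \ref{carries} is not needed for this particular conclusion.
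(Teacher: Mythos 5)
Your proof is correct, and it is essentially the canonical argument for this fact. One point of context: this paper does not actually prove Theorem~\ref{cycle_union} at all --- it is stated as an imported result from \cite{holt_5} --- so there is no in-paper proof to measure you against; your write-up stands as a self-contained justification. The two steps you identify are exactly the right ones: the disjoint-cycle decomposition of $\sigma$ turns the edge set $\{(d_j,d_{\sigma(j)})\}$ into a collection of closed walks on the digits (with Theorem~\ref{possible_edges_mg} placing every such edge inside $M$), and the standard peeling argument decomposes each closed walk's edge set into simple directed cycles of $M$. Your closing caveat is also well judged: because digits may repeat, distinct positions $j$ can yield the same ordered pair, and the closed walks can revisit vertices, so what one obtains is a set-theoretic covering of the edges of $G_C$ by cycles of $M$, not an edge-disjoint (balanced-degree) decomposition --- and the set-theoretic union is all the theorem claims. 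Your handling of fixed points as loops is likewise consistent with the paper's usage, which treats loops such as $\{(9,9)\}$ as cycles (cf.\ Example~\ref{example_2}).
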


We now provide the reader with an example which brings together the concepts we have covered so far.

\begin{example}\label{example_1}
The $(4,10)$-mother graph is displayed in Figure \ref{4_10_mg}. Letting $p=(8,7,9,1,2)_{10}=4 \cdot (2,1,9,7,8)_{10}$ from Table \ref{conj_class_table}, and $C$ be the $(4,10)$-permutiple class with graph $G_C=G_p,$ Figure \ref{4_10_mg} features the graph of $C$ in bold red.

\begin{center}
\begin{tikzpicture}
\tikzset{edge/.style = {->,> = latex'}}
\tikzset{vertex/.style = {shape=circle,draw,minimum size=1.5em}}
[xscale=2, yscale=2, auto=left,every node/.style={circle,fill=blue!20}]

\node[vertex] (n0) at (13,5) {$0$};
\node[vertex,red, very thick] (n1) at (12.427051918969068,6.763354468797628) {$\bf 1$};
\node[vertex,red, very thick] (n2) at (10.927054011580958,7.853168564878643) {$\bf 2$};
\node[vertex] (n3) at (9.07295355956129,7.853171024889661) {$3$};
\node[vertex] (n4) at (7.5729527602588975,6.76336090919162) {$4$};
\node[vertex] (n5) at (7.000000000010562,5.00000796076938) {$5$};
\node[vertex] (n6) at (7.572943401820056,3.236651971608781) {$6$};
\node[vertex,red, very thick] (n7) at (9.072938417283323,2.1468338951524664) {$\bf 7$};
\node[vertex,red, very thick] (n8) at (10.927038869289936,2.1468265151194124) {$\bf 8$};
\node[vertex,red, very thick] (n9) at (12.427042560496046,3.236632650426805) {$\bf 9$};
\draw[edge](n0) to[in=40,out=-40, loop, style={min distance=10mm}] (n0);
\draw[edge, bend right=5](n0) to (n2);
\draw[edge, bend right=0](n0) to (n5);
\draw[edge, bend right=5](n0) to (n7);
\draw[edge, bend right=0](n1) to (n0);
\draw[edge, bend right=0](n1) to (n2);
\draw[edge, bend right=5](n1) to (n5);
\draw[edge, red, bend right=5, very thick](n1) to (n7);
\draw[edge, bend right=5](n2) to (n0);
\draw[edge, bend right=0](n2) to (n3);
\draw[edge, bend right=0](n2) to (n5);
\draw[edge, red, bend right=5, very thick](n2) to (n8);
\draw[edge, bend right=0](n3) to (n0);
\draw[edge](n3) to[in=140,out=60, loop, style={min distance=10mm}] (n3);
\draw[edge, bend right=5](n3) to (n5);
\draw[edge, bend right=0](n3) to (n8);
\draw[edge, bend right=0](n4) to (n1);
\draw[edge, bend right=0](n4) to (n3);
\draw[edge, bend right=5](n4) to (n6);
\draw[edge, bend right=5](n4) to (n8);
\draw[edge, bend right=5](n5) to (n1);
\draw[edge, bend right=5](n5) to (n3);
\draw[edge, bend right=0](n5) to (n6);
\draw[edge, bend right=0](n5) to (n8);
\draw[edge, bend right=0](n6) to (n1);
\draw[edge, bend right=5](n6) to (n4);
\draw[edge](n6) to[in=190,out=270, loop, style={min distance=10mm}] (n6);
\draw[edge, bend right=0](n6) to (n9);
\draw[edge, red, bend right=5,very thick](n7) to (n1);
\draw[edge, bend right=0](n7) to (n4);
\draw[edge, bend right=0](n7) to (n6);
\draw[edge, bend right=5](n7) to (n9);
\draw[edge, red, bend right=5, very thick](n8) to (n2);
\draw[edge, bend right=5](n8) to (n4);
\draw[edge, bend right=0](n8) to (n7);
\draw[edge, bend right=0](n8) to (n9);
\draw[edge, bend right=0](n9) to (n2);
\draw[edge, bend right=0](n9) to (n4);
\draw[edge, bend right=5](n9) to (n7);
\draw[edge,red, very thick](n9) to[in=0,out=-80, loop, style={min distance=10mm}] (n9);
\end{tikzpicture}
\captionof{figure}{The $(4,10)$-mother graph with the graph of $C$ featured in bold red.}
\label{4_10_mg}
\end{center}
\end{example}

\subsection{Finite-State-Machine Description of the Permutiple Problem}

Taking the carries of a permutiple as a collection of states, the above concepts can be placed within a finite-state-machine framework. Taking non-negative integers less than $n$ as the collection of states, and the edges of $M$ as the input alphabet, the equation
\begin{equation}
 c_2=\left[nd_{2}-d_1+c_1\right]\div b
 \label{state_transition}
\end{equation}
defines a state-transition function from state $c_1$ to state $c_2$ with $(d_1,d_{2})$ serving as the input which induces the transition. This transition corresponds to a labeled edge on the state diagram as seen in Figure \ref{state_diagram}.
\begin{center}
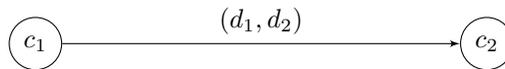

\begin{tikzpicture}
\tikzset{edge/.style = {->,> = latex'}}
\tikzset{vertex/.style = {shape=circle,draw,minimum size=1.5em}}
[xscale=2, yscale=2, auto=left,every node/.style={circle,fill=blue!20}]
\node[vertex] (n0) at (7,5) {$c_1$};
\node[vertex] (n1) at (13,5) {$c_2$};
\draw[edge] (n0) edge node[above] {$(d_1,d_2)$} (n1);
\end{tikzpicture}
\captionof{figure}{An edge on the state diagram.}
\label{state_diagram}
\end{center}
The first carry $c_0$ of any single-digit multiplication is zero by definition \cite{holt_3}. This is to say that the initial state must be zero. Also, for an $\ell$-digit $(n,b)$-permutiple, $c_{\ell}$ must also be zero, otherwise, the result would be an $(\ell+1)$-digit number. Thus, the zero state is the only possible accepting state. The above is called the $(n,b)$-{\it Hoey-Sloane machine}, and its state diagram is called the $(n,b)$-{\it Hoey-Sloane graph}, which we denote as $\Gamma.$

Digit pairs $(d_1,d_{2})$ which solve Equation (\ref{state_transition}) for particular values of $c_1$ and $c_2$ are not unique. That is, there are generally multiple inputs that the machine will recognize for a transition to occur. Thus, a collection of inputs is assigned to each edge on $\Gamma$ by the mapping  $(c_1,c_2) \mapsto \{(d_1,d_2)\in E_M|c_2=\left[nd_{2}-d_1+c_1\right]\div b\},$ where $E_M$ is the collection of edges of $M.$

In the next section, we will define a labeled multigraph representation of $\Gamma,$ where a unique multi-edge is assigned to each input.

The language of input strings accepted by the $(n,b)$-Hoey-Sloane machine is denoted as $L.$ Thus, $L$ may be described as finite sequences of edge-label inputs which define walks on $\Gamma$ whose initial and final states are zero. Such walks are called {\it $L$-walks}. Members of $L$ which produce permutiple numbers are called {\it $(n,b)$-permutiple strings.}

We may interpret Theorem \ref{cycle_union} anew in this setting.

\begin{corollary}[\cite{holt_5}]\label{cycle_union_2}
Let $s=(d_{0},\hat{d_{0}})(d_{1},\hat{d_{1}})\cdots(d_{k},\hat{d_{k}})$ be a member of $L.$ If $s$ is a permutiple string, then the collection of ordered-pair inputs of $s$ is a union of cycles of $M.$
\end{corollary}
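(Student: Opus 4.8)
The plan is to translate the statement into the graph-theoretic language of Theorem \ref{cycle_union} and then invoke that theorem directly. Since $s$ is a permutiple string, it is accepted by the $(n,b)$-Hoey-Sloane machine and produces a permutiple number $p=(d_k,d_{k-1},\ldots,d_0)_b=n(d_{\sigma(k)},\ldots,d_{\sigma(0)})_b$ for some permutation $\sigma$. The first thing I would do is identify the ordered-pair inputs of $s$ with the edges of the graph $G_p$. Comparing the state-transition equation (\ref{state_transition}), in which the input $(d_1,d_2)$ drives the passage from carry $c_1$ to carry $c_2$, with the digit-carry relation $bc_{j+1}-c_j=nd_{\sigma(j)}-d_j$ of Theorem \ref{digits_carries_1}, the $j$-th input pair must record the result digit $d_j$ together with the multiplicand digit $d_{\sigma(j)}$; that is, $\hat{d_j}=d_{\sigma(j)}$. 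Consequently the collection of ordered-pair inputs of $s$ is exactly $\{(d_j,d_{\sigma(j)})\mid 0\leq j\leq k\}=E_p$, the edge set of $G_p$, and each such pair is an edge of $M$ since the input alphabet of the machine consists precisely of the edges of $M$.

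With this dictionary in hand, the conclusion follows immediately. Let $C$ be the class of $p$ in the sense of Definition \ref{perm_class}, so that $G_C=G_p$ by definition. Theorem \ref{cycle_union} asserts that $G_C$ is a union of cycles of $M$, and since $G_C=G_p$ has edge set $E_p$, which we have just shown to be the collection of inputs of $s$, this collection is a union of cycles of $M$, as required.

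The main obstacle I anticipate is making the first step fully rigorous rather than routine. One must verify that reading $s$ from its initial pair $(d_0,\hat{d_0})$ to its final pair corresponds to processing the permutiple multiplication from the least significant digit upward, so that the machine begins in the mandatory initial state $c_0=0$ and the index $j$ on the inputs aligns with the index on the digits and carries. One should also be careful about multiplicity: the inputs of $s$ form a sequence that may repeat a given pair for distinct values of $j$, whereas $E_p$ is the underlying set of distinct edges. Here the claim is to be read at the level of the simple graph $G_p$, where such repetitions collapse, and it is precisely this underlying edge set to which Theorem \ref{cycle_union} applies; once the correspondence between accepted inputs and edges is established, the corollary is a direct restatement of that theorem.
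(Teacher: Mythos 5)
Your proof is correct and follows the same route the paper intends: Corollary \ref{cycle_union_2} is presented there as a direct reinterpretation of Theorem \ref{cycle_union} in the machine setting, with no separate argument beyond identifying the inputs $(d_j,\hat{d_j})=(d_j,d_{\sigma(j)})$ of a permutiple string with the edge set $E_p=G_C$ of the associated class. Your added care about indexing from the least significant digit and about collapsing repeated inputs to the underlying edge set is exactly the right way to make that identification rigorous.
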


Corollary \ref{cycle_union_2} tells us that any permutiple string must consist of a collection of mother-graph edge inputs which induce an $L$-walk on $\Gamma$ and whose union is a collection of cycles of $M.$ We note, however, that satisfying the above two conditions is not sufficient for a string to be a permutiple string. An example is provided by \cite{holt_5} of a member of $L$ whose union is a collection of mother-graph cycles, yet is not a permutiple string. A more precise description of these ideas requires that we restate some definitions from \cite{holt_5}.

\begin{definition}[\cite{holt_5}]\label{cycle_image}
 Let $\mathscr{C}=\{C_0,C_1,\ldots,C_m\}$ be the cycles of $M.$ For each element $C_j$ of $\mathscr{C},$ define a subgraph $\Gamma_j$ of $\Gamma,$ where each edge of $\Gamma_j$ is assigned the edge-label collection by the mapping
$
(c_1,c_2)\mapsto \left\{(d_1,d_2)\in C_j|c_2=\left[nd_{2}-d_1+c_1\right]\div b\right \}.
$
Any edge $(c_1,c_2)$ for which this collection is empty will not be included as an edge on $\Gamma_j.$ With the above edges, any state for which both the indegree and outdegree are zero will not be included as a vertex. Each $\Gamma_j$ will be referred to as the {\it image of $C_j,$} or simply as a {\it cycle image}.
\end{definition}

In what follows, we will distinguish the usual set-theoretic union $\cup$ from multiset unions by using the notation $\uplus.$ For example, the multiset union of the multisets $\{1,2,2,3\}$ and $\{2,3,4\}$ is denoted as $\{1,2,2,3\} \uplus\{2,3,4\}=\{1,2,2,2,3,3,4\}.$ We suppose that $I$ is a multiset whose support is a subset $J$ of $\{0,1,\ldots,m\}.$ Then, if the cycle-image union $\Gamma_{J}=\bigcup_{j\in J}\Gamma_j$ (edge labels included) is a strongly-connected subgraph of $\Gamma$ containing the zero state, then $\Gamma_J$ describes a machine which recognizes members of $L$ whose inputs form the union of cycles $\bigcup_{j\in J}C_j.$ If the multiset cycle union $C_{I}=\biguplus_{j\in I}C_j$ can be ordered into a string $s$ belonging to $L,$ then $s$ is a permutiple string. We note that every element of $C_I$ must be used, including repeated elements, otherwise, the multisets of left and right components will not be equal, resulting in a multiplication which does not preserve the digits.

We now provide the reader with another example by considering the $(4,10)$-Hoey-Sloane graph.

\begin{example}\label{example_2}
The $(4,10)$-Hoey-Sloane graph is shown in Figure \ref{4_10_hsg}. The graph depicting the union of the images of the cycles of $G_C$ from Example \ref{example_1} is shown in bold red. The cycles of $G_C$ are $C_0=\{(9,9)\},$ $C_1=\{(2,8),(8,2)\},$ and $C_2=\{(1,7),(7,1)\},$ and the cycle-image union may be more precisely denoted as $\Gamma_0 \cup \Gamma_1 \cup \Gamma_2.$

Using the Hoey-Sloane graph, the multiset union $C_0 \uplus C_1 \uplus C_1 \uplus C_2 \uplus C_2$ may be ordered into a member of $L,$ forming a permutiple string. There are multiple ways of accomplishing this, one of which is $(8,2)(8,2)(2,8)(9,9)(1,7)(1,7)(7,1)(2,8)(7,1),$ yielding a new $(4,10)$-permutiple $(7,2,7,1,1,9,2,8,8)_{10}=4\cdot(1,8,1,7,7,9,8,2,2)_{10}.$ Another possible ordering is $(8,2)(2,8)(1,7)(7,1)(2,8)(9,9)(1,7)(7,1)(8,2),$ which gives the new example $(8,7,1,9,2,7,1,2,8)_{10}=4\cdot(2,1,7,9,8,1,7,8,2)_{10}.$

As noted in \cite{holt_6}, not every multiset union can be ordered into a permutiple string. As a trivial example, any of the above cycles individually are not sufficient to form an element of $L.$ A less trivial example is $C_1\uplus C_1 \uplus C_2,$ which is also impossible to order into a member of $L.$ We will address this specific case in a later example.

Generally speaking, we see that a strongly-connected union of cycle images containing the zero state is a necessary condition for ordering its corresponding multiset union of mother-graph cycles into permutiple strings \cite{holt_5}, but it is not a sufficient one. Finding sufficient conditions is the purpose of this effort.

\begin{center}
\begin{tikzpicture}
\tikzset{edge/.style = {->,> = latex'}}
\tikzset{vertex/.style = {shape=circle,draw,minimum size=1.5em}}
[xscale=2, yscale=2, auto=left,every node/.style={circle,fill=blue!20}]
\node[vertex,red,initial,accepting,very thick] (n0) at (0,5) {$\bf 0$};
\node[vertex] (n1) at (3.0,5) {$1$};
\node[vertex] (n2) at (6.0,5) {$2$};
\node[vertex,red,very thick] (n3) at (9.0,5) {$\bf 3$};
\draw[edge,red,very thick](n0) to[in=150,out=90, loop, style={min distance=13mm}] node[above] {\footnotesize{$\color{black}{(0,0), (4,1), }\,\color{red}{\bf (8,2)}$}} (n0);
\draw[edge](n1) to[in=110,out=70, loop, style={min distance=7mm}] node[above] {\footnotesize{$(3,3),(7,4)$}} (n1);
\draw[edge](n2) to[in=110,out=70, loop, style={min distance=7mm}] node[above] {\footnotesize{$(2,5),(6,6)$}} (n2);
\draw[edge,red,very thick](n3) to[in=90,out=30, loop, style={min distance=13mm}] node[above] {\footnotesize{$\color{red}{\bf (1,7)}\color{black}{,(5,8),} \color{red}{\bf (9,9)}$}} (n3);
\draw[edge, bend left=10](n0) edge node[above] {\footnotesize{$(2,3),(6,4)$}} (n1);
\draw[edge, bend left=10](n1) edge node[below] {\footnotesize{$(1,0),(5,1),(9,2)$}} (n0);
\draw[edge, bend left=10](n2) edge node[below] {\footnotesize{$(0,2),(4,3),(8,4)$}} (n1);
\draw[edge, bend left=10](n1) edge node[above] {\footnotesize{$(1,5),(5,6),(9,7)$}} (n2);
\draw[edge, bend left=55](n1) edge node[above] {\footnotesize{$(3,8),(7,9)$}} (n3);
\draw[edge, bend right=10](n2) edge node[below] {\footnotesize{$(0,7),(4,8),(8,9)$}} (n3);
\draw[edge, bend right=10](n3) edge node[above] {\footnotesize{$(3,5),(7,6)$}} (n2);
\draw[edge, bend left=45](n3) edge node[below] {\footnotesize{$(1,2),(5,3),(9,4)$}} (n1);

\draw[edge, bend right=45](n0) edge node[below] {\footnotesize{$(0,5),(4,6),(8,7)$}} (n2);
\draw[edge, red, bend right=60,very thick](n0) edge node[below] {\footnotesize{${\bf (2,8)}\color{black}{,(6,9)}$}} (n3);
\draw[edge, bend right=55](n2) edge node[above] {\footnotesize{$(2,0),(6,1)$}} (n0);
\draw[edge, red, bend right=70,very thick](n3) edge node[above] {\footnotesize{$\color{black}{(3,0),}\,\color{red}{{\bf (7,1)}}$}} (n0);
\end{tikzpicture}
\captionof{figure}{The $(4,10)$-Hoey-Sloane graph with cycle images of $G_C$ in bold red.}
\label{4_10_hsg}
\end{center}
\end{example}

\section{The Hoey-Sloane Multigraph}

We begin by stating and proving a result which shows that an edge label $(d_1,d_2)$ cannot appear on two or more distinct edges of $\Gamma.$

\begin{theorem}\label{unique_transition}
 If $c_1,$ $c_2,$ $\hat{c}_1,$ and $\hat{c}_2$ are states on $\Gamma,$ and $(d_1,d_2)$ is an input associated with the transitions $(c_1,c_2)$ and $(\hat{c}_1,\hat{c}_2),$ then $(c_1,c_2)=(\hat{c}_1,\hat{c}_2).$
 \end{theorem}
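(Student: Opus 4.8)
The plan is to reduce the statement to the defining relation between carries and digits, and then exploit the fact that all carries are bounded by $n-1<b$. First I would record what it means for $(d_1,d_2)$ to label each of the two transitions. By Equation~(\ref{state_transition})---equivalently, by the identity $bc_{j+1}-c_j=nd_{\sigma(j)}-d_j$ of Theorem~\ref{digits_carries_1}---the transition $(c_1,c_2)$ carrying the label $(d_1,d_2)$ satisfies the exact relation
\[
bc_2-c_1=nd_2-d_1,
\]
and likewise the transition $(\hat c_1,\hat c_2)$ satisfies $b\hat c_2-\hat c_1=nd_2-d_1$.

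The key step is to subtract these two identities. Since both right-hand sides are the same expression $nd_2-d_1$ determined by the common label, subtracting eliminates the digits entirely and yields
\[
b(c_2-\hat c_2)=c_1-\hat c_1.
\]
Thus $c_1-\hat c_1$ is forced to be an integer multiple of $b$.

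Next I would bring in the carry bound. Since the states of $\Gamma$ are the non-negative integers less than $n$---equivalently, by Theorem~\ref{carries}, every carry satisfies $0\le c\le n-1$---both $c_1$ and $\hat c_1$ lie in $\{0,1,\ldots,n-1\}$, so $|c_1-\hat c_1|\le n-1$. Because $n<b$, this gives $|c_1-\hat c_1|<b$. The only multiple of $b$ with absolute value strictly less than $b$ is $0$, which forces $c_1=\hat c_1$; substituting back into $b(c_2-\hat c_2)=c_1-\hat c_1=0$ gives $c_2=\hat c_2$, and hence $(c_1,c_2)=(\hat c_1,\hat c_2)$.

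I do not expect a serious obstacle here: once the transition identity is in place, the argument is essentially a one-line congruence. The only point requiring care is the interpretation of the division in Equation~(\ref{state_transition}) as the exact relation $bc_2-c_1=nd_2-d_1$ rather than a mere floor; this is justified because every edge of $\Gamma$ arises from a genuine digit-preserving multiplication, for which Theorem~\ref{digits_carries_1} holds with equality. Everything else follows from the single inequality $n-1<b$ supplied by Theorem~\ref{carries}.
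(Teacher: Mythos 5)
Your proposal is correct and follows essentially the same route as the paper: both start from the two exact relations $bc_2-c_1=nd_2-d_1$ and $b\hat{c}_2-\hat{c}_1=nd_2-d_1$ and invoke the carry bound $c<n<b$ from Theorem~\ref{carries}. Your subtraction step, giving $b(c_2-\hat{c}_2)=c_1-\hat{c}_1$ and then using that no nonzero multiple of $b$ has absolute value below $b$, is just the paper's reduction modulo $b$ phrased as a divisibility argument.
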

 \begin{proof}
 By Equation (\ref{state_transition}), we have both
$bc_2-c_1=nd_2-d_1$ and  $b\hat{c}_2-\hat{c}_1=nd_2-d_1.$ Reducing both equations modulo $b,$ we have $c_1\equiv d_1-nd_2\equiv \hat{c}_1 \pmod{b}.$ Since $c_1$ and $\hat{c}_1$ are less than $n$ by definition, it follows that $c_1=\hat{c}_1.$ A routine calculation then shows that $c_2=\hat{c}_2.$
\end{proof}

 We now show that any edge $(d_1,d_2)$ of $M$ induces some transition $(c_1,c_2)$ on $\Gamma.$

\begin{theorem}\label{mg_edge_to_transition}
Let $(d_1,d_2)$ be an edge of $M.$ Then, there are integers $0\leq c_1 \leq n-1$ and $0 \leq c_2 \leq n-1$ such that $bc_2-c_1=nd_2-d_1.$
\end{theorem}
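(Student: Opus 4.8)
The plan is to produce $c_1$ and $c_2$ explicitly and then verify the two range constraints. The natural candidate for $c_1$ is dictated by the target equation itself: reducing $bc_2 - c_1 = nd_2 - d_1$ modulo $b$ forces $c_1 \equiv d_1 - nd_2 \pmod{b}$. Accordingly, I would set $c_1 = \lambda(d_1 - nd_2)$, the least non-negative residue of $d_1 - nd_2$ modulo $b$, so that $c_1 \geq 0$ is automatic.

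The step I regard as the real content is linking this choice of $c_1$ to the hypothesis that $(d_1,d_2)$ is an edge of $M$. The key observation is that $-nd_2 \equiv (b-n)d_2 \pmod{b}$, whence $\lambda(d_1 - nd_2) = \lambda\!\left(d_1 + (b-n)d_2\right)$. Since $(d_1,d_2)$ is an edge of $M$, Definition \ref{mother_graph} gives $\lambda\!\left(d_1 + (b-n)d_2\right) \leq n-1$, and therefore $0 \leq c_1 \leq n-1$. In other words, the defining inequality of the mother graph is precisely the assertion that the value of $c_1$ forced by the congruence already lies in the admissible range; this is the only place where anything beyond routine arithmetic is used.

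With $c_1$ fixed, I would define $c_2 = (nd_2 - d_1 + c_1)/b$. Because $c_1 \equiv d_1 - nd_2 \pmod{b}$, the numerator $nd_2 - d_1 + c_1$ is divisible by $b$, so $c_2$ is an integer, and a one-line computation gives $bc_2 - c_1 = nd_2 - d_1$, as required. It then remains only to bound $c_2$, which is forced once the range of $c_1$ is in hand: using $0 \leq d_1$, $d_2 \leq b-1$, and $c_1 \leq n-1$ yields $bc_2 = nd_2 - d_1 + c_1 \leq n(b-1) + (n-1) = nb - 1 < nb$, so $c_2 \leq n-1$; symmetrically, $d_2 \geq 0$, $d_1 \leq b-1$, and $c_1 \geq 0$ give $bc_2 \geq -(b-1) > -b$, so $c_2 \geq 0$. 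Thus both carries lie in $\{0,1,\ldots,n-1\}$, completing the argument.
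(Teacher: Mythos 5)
Your proof is correct and follows essentially the same route as the paper's: take $c_1$ to be the least non-negative residue of $d_1-nd_2$ modulo $b$ (which the mother-graph inequality bounds by $n-1$), define $c_2$ by solving $bc_2=nd_2-d_1+c_1$, and bound $bc_2$ between $-(b-1)$ and $nb-1$. If anything, your lower bound $bc_2\geq -(b-1)>-b$ is stated more carefully than the paper's (which writes $-b-1\leq bc_2$, apparently a typo), so nothing is missing.
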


\begin{proof}
From our assumption, we know that $\lambda \left(d_1+(b-n)d_{2}\right)\leq n-1.$ Then, $d_1+(b-n)d_2\equiv  d_1-nd_2 \equiv c_1 \pmod b,$ where $0\leq c_1 \leq n-1.$ It follows that $nd_2-d_1 \equiv -c_1 \pmod b.$ We then have, for some integer $c_2,$ that $nd_2-d_1=bc_2-c_1.$ In another form, $bc_2=nd_2-d_1+c_1,$ from which we may say that $-b-1\leq bc_2 \leq  n(b-1)+n-1=nb-1.$ Thus, $0\leq c_2 \leq n-1,$ and the proof is complete.
\end{proof}

With Theorems \ref{unique_transition} and \ref{mg_edge_to_transition}, we may now define a directed, labeled multigraph which we will call the $(n,b)$-Hoey-Sloane multigraph.

\begin{definition}
Let $N$ be the collection of non-negative integers less than $n.$  We map each edge $(d_1,d_2)$ of $M$ to the multi-edge $(c_1,c_2)$ in $N\times N,$ which uniquely satisfies Equation (\ref{state_transition}), to form a labeled multi-edge, which we may visualize in the same fashion as Figure \ref{state_diagram}. Taking $N$ as the collection of vertices, along with the collection of labeled multi-edges defined above, we will call this construction the {\it $(n,b)$-Hoey-Sloane multigraph}. To distinguish the multigraph from the usual $(n,b)$-Hoey-Sloane graph $\Gamma,$ we will denote the $(n,b)$-Hoey-Sloane multigraph as $\Delta.$
\end{definition}

We note that $\Delta$ is simply an alternative representation of $\Gamma,$ where each multi-edge has a unique element from $M$ as a label, rather than a single edge with a collection of inputs from $M.$ For this reason, when referencing $\Delta,$ we will retain the finite-state-machine terminology used when referencing $\Gamma.$

We now provide two simple examples of the above definition.

\begin{example}\label{example_3}

The $(2,4)$-mother graph $M$ is seen in Figure \ref{2_4_mg}. Mapping each edge of $M$ to its multi-edge uniquely determined by Equation (\ref{state_transition}), we obtain the $(2,4)$-Hoey-Sloane multigraph $\Delta$ in Figure \ref{2_4_hsg}.

\begin{center}
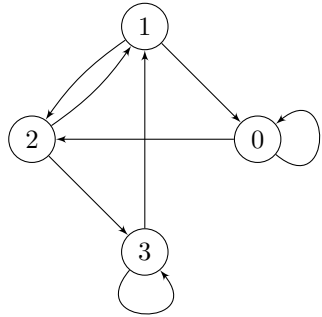

\begin{tikzpicture} \tikzset{edge/.style = {->,> = latex'}} \tikzset{vertex/.style = {shape=circle,draw,minimum size=1.5em}} [xscale=2, yscale=2, auto=left,every node/.style={circle,fill=blue!20}]
\node[vertex] (n0) at (3.5,5) {$0$}; \node[vertex] (n1) at (2.000001990192345,6.49999999999868) {$1$};
 \node[vertex] (n2) at (0.50000000000528,5.00000398038469) {$2$}; \node[vertex] (n3) at (1.999994029422965,3.500000000011883) {$3$};
\draw[edge](n0) to [in=40,out=-40, loop, style={min distance=10mm}] (n0);
\draw[edge](n0) to (n2);
\draw[edge](n1) to (n0);
\draw[edge, bend right=10](n1) to (n2);
\draw[edge, bend right=10](n2) to (n1);
\draw[edge](n2) to (n3);
\draw[edge](n3) to (n1);
\draw[edge](n3) to[in=310,out=230, loop, style={min distance=10mm}] (n3);
\end{tikzpicture}
\captionof{figure}{The $(2,4)$-mother graph.}
\label{2_4_mg}
\end{center}

\begin{center}
\begin{tikzpicture}
\tikzset{edge/.style = {->,> = latex'}}
\tikzset{vertex/.style = {shape=circle,draw,minimum size=1.5em}}
[xscale=2, yscale=2, auto=left,every node/.style={circle,fill=blue!20}]
\node[vertex,accepting,initial] (n0) at (7,5) {$0$};
\node[vertex] (n1) at (13,5) {$1$};
\draw[edge, bend right=10] (n0) edge node[below] {$(2,3)$} (n1);
\draw[edge, bend right=45] (n0) edge node[below] {$(0,2)$} (n1);
\draw[edge, bend right=10] (n1) edge node[above] {$(1,0)$} (n0);
\draw[edge, bend right=45] (n1) edge node[above] {$(3,1)$} (n0);
\draw[edge](n0) to[in=150,out=90, loop, style={min distance=10mm}] node[above] {$(0,0)$} (n0);
\draw[edge](n0) to[in=270,out=210, loop, style={min distance=10mm}] node[below] {$(2,1)$} (n0);
\draw[edge](n1) to[in=90,out=30, loop, style={min distance=10mm}] node[above] {$(3,3)$} (n1);
\draw[edge](n1) to[in=-30,out=-90, loop, style={min distance=10mm}] node[below] {$(1,2)$} (n1);
\end{tikzpicture}
\captionof{figure}{The $(2,4)$-Hoey-Sloane multigraph.}
\label{2_4_hsg}
\end{center}

\end{example}

\begin{example}\label{example_4}
The $(3,4)$-mother graph $M$ is displayed in Figure \ref{3_4_mg}. Mapping each edge of $M$ to its multi-edge, we obtain the $(3,4)$-Hoey-Sloane multigraph $\Delta$ shown in Figure \ref{3_4_hsg}.
\begin{center}
\begin{tikzpicture}
\tikzset{edge/.style = {->,> = latex'}} \tikzset{vertex/.style = {shape=circle,draw,minimum size=1.5em}} [xscale=2, yscale=2, auto=left,every node/.style={circle,fill=blue!20}] \node[vertex] (n0) at (3.5,5) {$0$}; \node[vertex] (n1) at (2.000001990192345,6.49999999999868) {$1$};
 \node[vertex] (n2) at (0.50000000000528,5.00000398038469) {$2$}; \node[vertex] (n3) at (1.999994029422965,3.500000000011883) {$3$};
\draw[edge](n0) to[in=40,out=-40, loop, style={min distance=10mm}] (n0);
\draw[edge, bend right=10](n0) to (n1);
\draw[edge, bend right=10](n0) to (n2);
\draw[edge, bend right=10](n1) to (n0);
\draw[edge](n1) to[in=130,out=50, loop, style={min distance=10mm}] (n1);
\draw[edge, bend right=10](n1) to (n3);
\draw[edge, bend right=10](n2) to (n0);
\draw[edge](n2) to[in=220,out=130, loop, style={min distance=10mm}] (n2);
\draw[edge, bend right=10](n2) to (n3);
\draw[edge, bend right=10](n3) to (n1);
\draw[edge, bend right=10](n3) to (n2);
\draw[edge](n3) to[in=310,out=230, loop, style={min distance=10mm}] (n3);
\end{tikzpicture}
\captionof{figure}{The $(3,4)$-mother graph.}
\label{3_4_mg}
\end{center}

\begin{center}
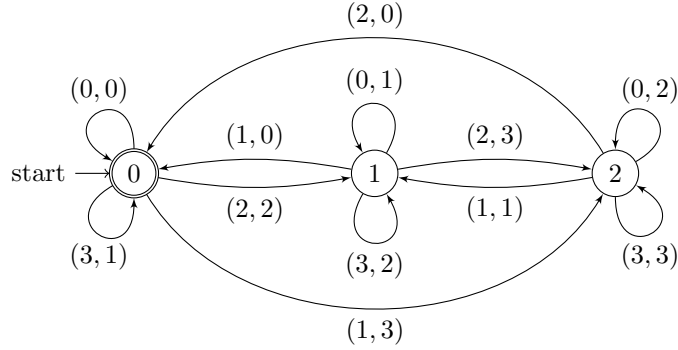

\begin{tikzpicture}
\tikzset{edge/.style = {->,> = latex'}}
\tikzset{vertex/.style = {shape=circle,draw,minimum size=1.5em}}
[xscale=2, yscale=2, auto=left,every node/.style={circle,fill=blue!20}]
\node[vertex,initial,accepting] (n0) at (0,5) {$0$};
\node[vertex] (n1) at (3.2,5) {$1$};
\node[vertex] (n2) at (6.4,5) {$2$};

\draw[edge, bend right=10](n0) edge node[below] {$(2,2)$} (n1);
\draw[edge, bend right=10](n1) edge node[above] {$(1,0)$} (n0);
\draw[edge, bend left=10](n2) edge node[below] {$(1,1)$} (n1);
\draw[edge, bend left=10](n1) edge node[above] {$(2,3)$} (n2);
\draw[edge, bend right=59](n0) edge node[below] {$(1,3)$} (n2);
\draw[edge, bend right=59](n2) edge node[above] {$(2,0)$} (n0);

\draw[edge](n0) to[in=150,out=90, loop, style={min distance=10mm}] node[above] {$(0,0)$} (n0);
\draw[edge](n0) to[in=270,out=210, loop, style={min distance=10mm}] node[below] {$(3,1)$} (n0);

\draw[edge](n1) to[in=120,out=60, loop, style={min distance=10mm}] node[above] {$(0,1)$} (n1);
\draw[edge](n1) to[in=300,out=240, loop, style={min distance=10mm}] node[below] {$(3,2)$} (n1);

\draw[edge](n2) to[in=90,out=30, loop, style={min distance=10mm}] node[above] {$(0,2)$} (n2);
\draw[edge](n2) to[in=-30,out=-90, loop, style={min distance=10mm}] node[below] {$(3,3)$} (n2);

\end{tikzpicture}
\captionof{figure}{The $(3,4)$-Hoey-Sloane multigraph.}
\label{3_4_hsg}
\end{center}

\end{example}

\section{A Multigraph Condition for Permutiple Strings}

We now define the multigraph analogue of the cycle images, which we encountered in Definition \ref{cycle_image}.

\begin{definition}\label{multi_cycle_image}
 Let $\mathscr{C}=\{C_0,C_1,\ldots,C_m\}$ be the cycles of $M.$ For each element $C_j$ of $\mathscr{C},$ define a multigraph $\Delta_j$ to consist of all edges $(c_1,c_2)$ of the $(n,b)$-Hoey-Sloane multigraph $\Delta$ for which the inputs in $C_j$ are an edge label of $(c_1,c_2).$ With these edges, any state for which both the indegree and outdegree are zero will not be included as a vertex of $\Delta_j.$ Each $\Delta_j$ will be referred to as the {\it multi-image of $C_j,$} or simply as a {\it cycle multi-image}.
\end{definition}

We shall presently see that $C_{I}=\biguplus_{j\in I}C_j$ may be ordered into permutiple strings precisely when the corresponding multigraph union $\Delta_{I}=\biguplus_{j\in I}\Delta_j$ contains the zero state, is strongly connected, and contains an Eulerian circuit in the multigraph sense \cite{bang,farrell,silberger}. That is, every multi-edge of $\Delta_I$ must be used once when transitioning between states.

\begin{theorem}
Let $\{C_0,C_1,\ldots,C_m\}$ be the collection of cycles of $M,$ and let $\Delta_j$ be the corresponding multi-image of $C_j.$ Also, let $I$ be a multiset whose support is a subset of $\{0,1,\ldots,m\}.$ If a multigraph union of cycle multi-images $\Delta_I=\biguplus_{j\in I}\Delta_j$ contains an Eulerian circuit beginning and ending with the zero state, then the corresponding multiset union of mother-graph cycles $C_I=\biguplus_{j\in I}C_j$ may be ordered into a permutiple string.
\end{theorem}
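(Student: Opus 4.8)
The plan is to turn the Eulerian circuit into an explicit ordering of $C_I$ and then recognize that ordering as a member of $L$, at which point the result follows from the correspondence already recorded after Definition \ref{cycle_image}. The first thing I would pin down is the exact bookkeeping between the multiedges of $\Delta_I$ and the elements of $C_I$. By the construction of $\Delta$, together with Theorems \ref{unique_transition} and \ref{mg_edge_to_transition}, every edge $(d_1,d_2)$ of $M$ is the label of exactly one multiedge of $\Delta$, and each multiedge carries exactly one such label. Consequently, by Definition \ref{multi_cycle_image}, the multiedges of $\Delta_j$ are in bijection with the edges of the cycle $C_j$, and taking the multiset union over $j\in I$ gives a bijection between the multiedges of $\Delta_I=\biguplus_{j\in I}\Delta_j$ and the elements of $C_I=\biguplus_{j\in I}C_j$, \emph{with multiplicity}.

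Next I would interpret the Eulerian circuit. By hypothesis $\Delta_I$ admits an Eulerian circuit beginning and ending at the zero state, i.e.\ a closed walk on $\Delta$ that traverses every multiedge exactly once and whose initial and terminal state is zero. Reading off the mother-graph label of each multiedge in the order it is traversed produces a string $s=(d_0,\hat{d}_0)(d_1,\hat{d}_1)\cdots(d_k,\hat{d}_k)$. Since consecutive multiedges of the circuit share a state, $s$ is a walk on $\Delta$, and because $\Delta$ is merely $\Gamma$ with its edge labels redistributed, $s$ is equally a walk on $\Gamma$; as this walk starts and ends at the zero state, $s$ is an $L$-walk, so $s\in L$. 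Moreover, since the circuit uses every multiedge of $\Delta_I$ exactly once, the bijection from the previous step shows that the multiset of inputs of $s$ is precisely $C_I$, every element being used the correct number of times.

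It then remains to conclude that $s$ is a permutiple string, and here I would appeal to the property noted after Definition \ref{cycle_image}: a member of $L$ whose inputs are exactly the multiset union $C_I$ of cycles of $M$ is a permutiple string. The reason is twofold. Membership in $L$ forces the carry relation of Equation (\ref{state_transition}) along the whole walk with initial and final carry zero, which is exactly the statement that the number built from the left components equals $n$ times the number built from the right components. And because each individual cycle of $M$ has identical multisets of left and right components (a directed cycle $v_0\to v_1\to\cdots\to v_{\ell-1}\to v_0$ contributes the same multiset $\{v_0,\dots,v_{\ell-1}\}$ on each side, and loops trivially so), using \emph{all} of $C_I$ makes the multiset of left components equal to the multiset of right components; thus the multiplication preserves digits and $s$ is indeed an $(n,b)$-permutiple string.

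The step I expect to require the most care is the multiset bookkeeping in the first paragraph: one must verify that passing from $C_I$ to $\Delta_I$ neither loses nor duplicates any edge, so that an Eulerian traversal of $\Delta_I$ corresponds to using each element of $C_I$ exactly once. This is precisely where the uniqueness of the $M$-edge-to-multiedge assignment (Theorems \ref{unique_transition} and \ref{mg_edge_to_transition}) and the multiset definition of $\Delta_j$ (Definition \ref{multi_cycle_image}) must be combined carefully; once the correspondence is exact, the translation of the circuit into a string and the final digit-preservation argument are routine.
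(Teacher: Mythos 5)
Your proposal is correct and takes essentially the same approach as the paper: the Eulerian circuit traverses every labeled multiedge exactly once, so reading off the labels yields a member of $L$ whose inputs are exactly the multiset $C_I$, which is a permutiple string by the correspondence recorded after Definition \ref{cycle_image}. The paper's own proof is a two-sentence sketch of precisely this argument; your version simply makes explicit the multiedge-to-input bijection and the digit-preservation reasoning that the paper leaves implicit.
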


\begin{proof}
If $\Delta_I$ contains an Eulerian circuit whose initial and final state is zero, then every labeled multi-edge is used exactly once when traversing the circuit. It follows that every edge-label input of $C_I$ is used to traverse the circuit.
\end{proof}

Since a multigraph contains an Eulerian circuit if and only if it is strongly connected, and the indegree and outdegree are equal at each vertex \cite{bang,farrell,silberger}, we may easily construct multigraphs which produce permutiple strings: form a multigraph union of cycle multi-images which 1) contains the zero state, 2) is strongly connected, and 3) the indegree is equal to the outdegree at each vertex. Any union of cycle multi-images satisfying all three of these conditions will be called {\it $L$-Eulerian}.

\begin{corollary}\label{indegree_outdegree}
Let $\{C_0,C_1,\ldots,C_m\}$ be the collection of cycles of $M,$ and let $\Delta_j$ be the corresponding multi-image of $C_j.$ Also, let $I$ be a multiset whose support is a subset of $\{0,1,\ldots,m\}.$ Then, the multiset union of mother-graph cycles $C_I=\biguplus_{j\in I}C_j$ may be ordered into a permutiple string if and only if the corresponding multigraph union of the cycle multi-images $\Delta_I=\biguplus_{j\in I}\Delta_j$ is $L$-Eulerian.
\end{corollary}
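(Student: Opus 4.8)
The plan is to recognize that this corollary simply packages the preceding Theorem together with the classical Eulerian-circuit characterization for directed multigraphs, namely that such a multigraph admits an Eulerian circuit precisely when it is strongly connected and has equal indegree and outdegree at each vertex \cite{bang,farrell,silberger}. The whole argument rests on a single translation: orderings of $C_I$ into permutiple strings correspond exactly to Eulerian circuits of $\Delta_I$ based at the zero state. Once this correspondence is in hand, both implications of the biconditional fall out, one from the preceding Theorem and one from its converse.

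For the direction asserting that the three conditions are sufficient, I would assume $\Delta_I$ contains the zero state, is strongly connected, and has equal indegree and outdegree at every vertex. The classical theorem then guarantees an Eulerian circuit in $\Delta_I$; since an Eulerian circuit is a closed walk that may be based at any vertex, the hypothesis that the zero state belongs to $\Delta_I$ lets me take the circuit to begin and end there. The preceding Theorem now applies verbatim and produces an ordering of $C_I$ into a permutiple string.

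For the necessity of the conditions, suppose $C_I$ can be ordered into a permutiple string $s$. By the construction underlying $\Delta$ (Theorems \ref{unique_transition} and \ref{mg_edge_to_transition}), each edge $(d_1,d_2)$ of $M$ corresponds to exactly one multiedge $(c_1,c_2)$ of $\Delta$, and by Definition \ref{multi_cycle_image} the multiedges of $\Delta_I$ are in a multiplicity-preserving bijection with the elements of $C_I$. Reading $s$ as an $L$-walk on $\Delta$, the walk begins and ends at the zero state (since $s\in L$) and traverses each multiedge of $\Delta_I$ exactly once (since $s$ lists each element of $C_I$ exactly once); hence it is an Eulerian circuit based at the zero state. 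In particular the zero state is a vertex of $\Delta_I$, and the classical theorem forces $\Delta_I$ to be strongly connected with equal indegree and outdegree at every vertex.

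The main obstacle is the bookkeeping of multiplicities in the necessity direction: I must check that the map sending an element of the multiset $C_I$ to its unique multiedge of $\Delta$ is genuinely a bijection onto the multiedges of $\Delta_I$ which respects repeated elements, so that ``each input used once'' in the string matches ``each multiedge traversed once'' in the circuit. I should also confirm that $\Delta_I$, being specified through its multiedges, carries no degree-zero vertices, so that the phrase \emph{strongly connected} is unambiguous and the classical Eulerian criterion applies without caveat.
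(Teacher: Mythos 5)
Your proposal is correct and takes essentially the same route as the paper: the paper gives the corollary no separate proof, relying exactly on the combination you describe—the preceding Theorem for sufficiency and the classical Eulerian-circuit criterion \cite{bang,farrell,silberger} for the degree/connectivity translation—with the necessity direction left implicit in the observation that a permutiple string ordering of $C_I$ is an $L$-walk traversing each multiedge of $\Delta_I$ exactly once from the zero state. Your added bookkeeping (the multiplicity-preserving bijection between elements of $C_I$ and multiedges of $\Delta_I$, and the absence of isolated vertices) simply makes explicit what the paper glosses over.
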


\begin{example}\label{example_5} We continue where Example \ref{example_3} leaves off.
The cycles of the $(2,4)$-mother graph and their corresponding multi-images are shown in Figure \ref{2_4_hsg}.
 \begin{center}
\begin{scriptsize}
\begin{tabular}{|c|c|l|c|}
\hline
 & {\bf Mother-Graph Cycle} & {\bf Cycle Multi-Image} &\\\hline

$C_0$ &
\begin{tikzpicture} \tikzset{edge/.style = {->,> = latex'}} \tikzset{vertex/.style = {shape=circle,draw,minimum size=1.5em}} [xscale=2, yscale=2, auto=left,every node/.style={circle,fill=blue!20}]
\node[vertex] (n0) at (4,5) {$0$};
\draw[edge](n0) to[in=30,out=-30, loop, style={min distance=7mm}] (n0);
\end{tikzpicture}
&
\begin{tikzpicture} \tikzset{edge/.style = {->,> = latex'}} \tikzset{vertex/.style = {shape=circle,draw,minimum size=1.5em}} [xscale=2, yscale=2, auto=left,every node/.style={circle,fill=blue!20}]
\node[vertex,accepting,initial] (n0) at (11,5.5) {$0$};
\node[vertex,white] (n1) at (14,5.5) {$1$};
\draw[edge](n0) to[in=150,out=90, loop, style={min distance=7mm}] node[above] {$(0,0)$} (n0);
\end{tikzpicture}
&
$\Delta_0$
\\\hline

$C_1$ &
\begin{tikzpicture} \tikzset{edge/.style = {->,> = latex'}} \tikzset{vertex/.style = {shape=circle,draw,minimum size=1.5em}} [xscale=2, yscale=2, auto=left,every node/.style={circle,fill=blue!20}]
\node[vertex] (n3) at (5.99999601961531,4.000000000007922) {$3$};
\draw[edge](n3) to[in=300,out=240, loop, style={min distance=7mm}] (n3);\end{tikzpicture}
&
\color{white}
\begin{tikzpicture} \tikzset{edge/.style = {->,> = latex'}} \tikzset{vertex/.style = {shape=circle,draw,minimum size=1.5em}} [xscale=2, yscale=2, auto=left,every node/.style={circle,fill=blue!20}]
\node[vertex,accepting,initial] (n0) at (11,5.5) {$0$};
\node[vertex,black] (n1) at (14,5.5) {$1$};
\draw[edge,black](n1) to[in=90,out=30, loop, style={min distance=7mm}] node[above] {$(3,3)$} (n1);
\end{tikzpicture}
&
$\Delta_1$
\\\hline

$C_2$ &
\begin{tikzpicture} \tikzset{edge/.style = {->,> = latex'}} \tikzset{vertex/.style = {shape=circle,draw,minimum size=1.5em}} [xscale=2, yscale=2, auto=left,every node/.style={circle,fill=blue!20}]
\node[vertex] (n1) at (6.000001326794896,5.99999999999912) {$1$};
\node[vertex] (n2) at (5.00000000000352,5.000002653589793) {$2$};
\draw[edge, bend right=10](n1) to (n2);
\draw[edge,bend right=10](n2) to (n1);
\end{tikzpicture}

&

\begin{tikzpicture} \tikzset{edge/.style = {->,> = latex'}} \tikzset{vertex/.style = {shape=circle,draw,minimum size=1.5em}} [xscale=2, yscale=2, auto=left,every node/.style={circle,fill=blue!20}]
\node[vertex,accepting,initial] (n0) at (11,5.5) {$0$};
\node[vertex] (n1) at (14,5.5) {$1$};
\draw[edge](n0) to[in=270,out=210, loop, style={min distance=7mm}] node[below] {$(2,1)$} (n0);
\draw[edge](n1) to[in=-30,out=-90, loop, style={min distance=7mm}] node[below] {$(1,2)$} (n1);
\end{tikzpicture}
&
$\Delta_2$
\\\hline

$C_3$ &
\begin{tikzpicture} \tikzset{edge/.style = {->,> = latex'}} \tikzset{vertex/.style = {shape=circle,draw,minimum size=1.5em}} [xscale=2, yscale=2, auto=left,every node/.style={circle,fill=blue!20}]
\node[vertex] (n0) at (7,5) {$0$};
\node[vertex] (n1) at (6.000001326794896,5.99999999999912) {$1$};
\node[vertex] (n2) at (5.00000000000352,5.000002653589793) {$2$};
\draw[edge](n0) to (n2);
\draw[edge](n1) to (n0);
\draw[edge](n2) to (n1);
\end{tikzpicture}

&

\begin{tikzpicture} \tikzset{edge/.style = {->,> = latex'}} \tikzset{vertex/.style = {shape=circle,draw,minimum size=1.5em}} [xscale=2, yscale=2, auto=left,every node/.style={circle,fill=blue!20}]
\node[vertex,accepting,initial] (n0) at (11,5.5) {$0$};
\node[vertex] (n1) at (14,5.5) {$1$};
\draw[edge, bend right=10] (n0) edge node[below] {$(0,2)$} (n1);
\draw[edge, bend right=10] (n1) edge node[above] {$(1,0)$} (n0);
\draw[edge](n0) to[in=270,out=210, loop, style={min distance=7mm}] node[below] {$(2,1)$} (n0);
\end{tikzpicture}
&
$\Delta_3$
\\\hline

$C_4$ &
\begin{tikzpicture} \tikzset{edge/.style = {->,> = latex'}} \tikzset{vertex/.style = {shape=circle,draw,minimum size=1.5em}} [xscale=2, yscale=2, auto=left,every node/.style={circle,fill=blue!20}]
\node[vertex] (n1) at (6.000001326794896,5.99999999999912) {$1$};
\node[vertex] (n2) at (5.00000000000352,5.000002653589793) {$2$};
\node[vertex] (n3) at (5.99999203923062,4.0000000000158433) {$3$};
\draw[edge](n1) to (n2);
\draw[edge](n2) to (n3);
\draw[edge](n3) to (n1);
\end{tikzpicture}

&

\begin{tikzpicture} \tikzset{edge/.style = {->,> = latex'}} \tikzset{vertex/.style = {shape=circle,draw,minimum size=1.5em}} [xscale=2, yscale=2, auto=left,every node/.style={circle,fill=blue!20}]

\node[vertex,accepting,initial] (n0) at (11,5.5) {$0$};
\node[vertex] (n1) at (14,5.5) {$1$};
\draw[edge, bend right=10] (n0) edge node[below] {$(2,3)$} (n1);
\draw[edge, bend right=10] (n1) edge node[above] {$(3,1)$} (n0);
\draw[edge](n1) to[in=-30,out=-90, loop, style={min distance=7mm}] node[below] {$(1,2)$} (n1);
\end{tikzpicture}
&
$\Delta_4$
\\\hline

$C_5$ &
\begin{tikzpicture} \tikzset{edge/.style = {->,> = latex'}} \tikzset{vertex/.style = {shape=circle,draw,minimum size=1.5em}} [xscale=2, yscale=2, auto=left,every node/.style={circle,fill=blue!20}]
\node[vertex] (n0) at (7,5) {$0$};
\node[vertex] (n1) at (6.000001326794896,5.99999999999912) {$1$};
\node[vertex] (n2) at (5.00000000000352,5.000002653589793) {$2$};
\node[vertex] (n3) at (5.99999203923062,4.0000000000158433) {$3$};
\draw[edge](n0) to (n2);
\draw[edge](n1) to (n0);
\draw[edge](n2) to (n3);
\draw[edge](n3) to (n1);
\end{tikzpicture}

&

\begin{tikzpicture} \tikzset{edge/.style = {->,> = latex'}} \tikzset{vertex/.style = {shape=circle,draw,minimum size=1.5em}} [xscale=2, yscale=2, auto=left,every node/.style={circle,fill=blue!20}]
\node[vertex,accepting,initial] (n0) at (11,5.5) {$0$};
\node[vertex] (n1) at (14,5.5) {$1$};

\draw[edge, bend right=50] (n1) edge node[above] {$(3,1)$} (n0);
\draw[edge, bend right=10] (n1) edge node[above] {$(1,0)$} (n0);

\draw[edge, bend right=10] (n0) edge node[below] {$(2,3)$} (n1);
\draw[edge, bend right=50] (n0) edge node[below] {$(0,2)$} (n1);

\end{tikzpicture}
&
$\Delta_5$
\\\hline
\end{tabular}
\captionof{table}{Cycles of the $(2,4)$-mother graph and corresponding cycle multi-images.}
\label{2_4_ci}
\end{scriptsize}
\end{center}

Since $\Delta_0,$ $\Delta_3,$ $\Delta_4,$ and $\Delta_5$ are all $L$-Eulerian, we may form permutiple strings from these graphs individually. Moreover, since the indegrees and outdegrees are equal for all vertices for each $\Delta_j,$ we may say that any multigraph union of cycle multi-images which involves $\Delta_3,$ $\Delta_4,$ and $\Delta_5$ will enable us to form permutiple strings. Permutiples which may be formed from  multi-images of individual cycles mentioned above are given in Table \ref{2_4_indiv}.
\begin{center}
\begin{footnotesize}
 \begin{tabular}{|c|l|l|}
 \hline
 Cycle Multi-Image & Permutiple String & Permutiple Example\\\hline
$\Delta_0$ & $(0,0)$ &$(0)_4=2\cdot(0)_4$\\\hline
$\Delta_3$ & $(2,1)(0,2)(1,0)$ & $(1,0,2)_4=2\cdot(0,2,1)_4$\\
& $(0,2)(1,0)(2,1)$ &$(2,1,0)_4=2\cdot(1,0,2)_4$\\\hline
$\Delta_4$ & $(2,3)(1,2)(3,1)$ & $(3,1,2)_4=2\cdot(1,2,3)_4$\\\hline
$\Delta_5$ & $(0,2)(1,0)(2,3)(3,1)$& $(3,2,1,0)_4=2\cdot(1,3,0,2)_4$\\
 & $(0,2)(3,1)(2,3)(1,0)$& $(1,2,3,0)_4=2\cdot(0,3,1,2)_4$\\
 & $(2,3)(1,0)(0,2)(3,1)$& $(3,0,1,2)_4=2\cdot(1,2,0,3)_4$\\
 & $(2,3)(3,1)(0,2)(1,0)$& $(1,0,3,2)_4=2\cdot(0,2,1,3)_4$\\\hline
\end{tabular}
\end{footnotesize}
\captionof{table}{Examples of $(2,4)$-permutiples formed from $\Delta_0,$ $\Delta_3,$ $\Delta_4,$ and $\Delta_5.$}
\label{2_4_indiv}
\end{center}

A less trivial instance is the multiset cycle union $C_2 \uplus C_3,$ which gives a collection of inputs with which we may form permutiple strings since the corresponding multi-image union $\Delta_2 \uplus \Delta_3$ is $L$-Eulerian. This is verified by examining $\Delta_2 \uplus \Delta_3$ in Figure \ref{2_4_mi_union_1}. We may apply Corollary \ref{indegree_outdegree} to deduce that we may order $C_2 \uplus C_3$ into permutiple strings using $\Delta_2 \uplus \Delta_3.$
\begin{center}
\begin{tikzpicture} \tikzset{edge/.style = {->,> = latex'}} \tikzset{vertex/.style = {shape=circle,draw,minimum size=1.5em}} [xscale=2, yscale=2, auto=left,every node/.style={circle,fill=blue!20}]
\node[vertex,accepting,initial] (n0) at (0,0) {$0$};
\node[vertex] (n1) at (5,0) {$1$};
\draw[edge, bend right=10] (n0) edge node[below] {$(0,2)$} (n1);
\draw[edge, bend right=10] (n1) edge node[above] {$(1,0)$} (n0);
\draw[edge](n0) to[in=270,out=210, loop, style={min distance=7mm}] node[below] {$(2,1)$} (n0);
\draw[edge](n0) to[in=290,out=200, loop, style={min distance=23mm}] node[below] {$(2,1)$} (n0);
\draw[edge](n1) to[in=-30,out=-90, loop, style={min distance=7mm}] node[below] {$(1,2)$} (n1);
\end{tikzpicture}
\captionof{figure}{The multi-image union $\Delta_I=\Delta_2 \uplus \Delta_3$ corresponding to the multiset union $C_I=C_2 \uplus C_3$ of mother-graph cycles.}
\label{2_4_mi_union_1}
\end{center}
All numerically distinct, 5-digit examples obtained from Eulerian circuits on $\Delta_2 \uplus \Delta_3,$ beginning and ending with the zero state, are included in Table \ref{2_4_union_1}.
\begin{center}
\begin{footnotesize}
 \begin{tabular}{|c|l|l|}
 \hline
 Multi-Image Union & Permutiple String & Example\\\hline
$\Delta_2 \uplus \Delta_3$ & $(2,1)(0,2)(1,2)(1,0)(2,1)$ &$(2,1,1,0,2)_4=2\cdot(1,0,2,2,1)_4$\\
& $(2,1)(2,1)(0,2)(1,2)(1,0)$ &$(1,1,0,2,2)_4=2\cdot(0,2,2,1,1)_4$\\
& $(0,2)(1,2)(1,0)(2,1)(2,1)$ &$(2,2,1,1,0)_4=2\cdot(1,1,0,2,2)_4$\\\hline
\end{tabular}
\end{footnotesize}
\captionof{table}{Numerically distinct, 5-digit $(2,4)$-permutiples formed from the multigraph union $\Delta_2 \uplus \Delta_3.$}
\label{2_4_union_1}
\end{center}
As mentioned earlier, as well as in \cite{holt_5}, the multiset context allows for cycles to appear more than once. We take the multiset union
\[
C_3 \uplus C_3=\{(0,2),(2,1),(1,0),(0,2),(2,1),(1,0)\}
\]
as an example which gives us several six-digit permutiples. We first examine its corresponding multi-image union $\Delta_3 \uplus \Delta_3$ shown in Figure \ref{2_4_mi_union_2}.

\begin{center}
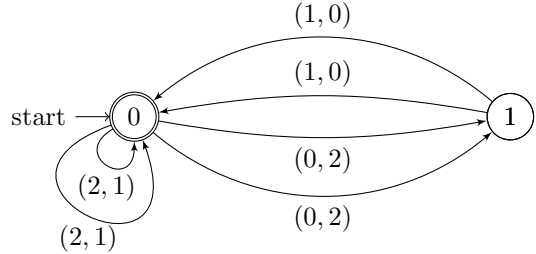

\begin{tikzpicture} \tikzset{edge/.style = {->,> = latex'}} \tikzset{vertex/.style = {shape=circle,draw,minimum size=1.5em}} [xscale=2, yscale=2, auto=left,every node/.style={circle,fill=blue!20}]
\node[vertex,accepting,initial] (n0) at (0,0) {$0$};
\node[vertex] (n1) at (5,0) {$1$};
\draw[edge, bend right=10] (n0) edge node[below] {$(0,2)$} (n1);

\draw[edge, bend right=40] (n0) edge node[below] {$(0,2)$} (n1);

\draw[edge, bend right=10] (n1) edge node[above] {$(1,0)$} (n0);
\draw[edge, bend right=40] (n1) edge node[above] {$(1,0)$} (n0);

\draw[edge](n0) to[in=270,out=210, loop, style={min distance=7mm}] node[below] {$(2,1)$} (n0);
\draw[edge](n0) to[in=290,out=200, loop, style={min distance=23mm}] node[below] {$(2,1)$} (n0);
\end{tikzpicture}
\captionof{figure}{The multi-image union $\Delta_I=\Delta_3 \uplus \Delta_3$ corresponding to the multiset union $C_I=C_3 \uplus C_3$ of mother-graph cycles.}
\label{2_4_mi_union_2}
\end{center}
Again, we see that the multi-image union $\Delta_3 \uplus \Delta_3$ is $L$-Eulerian. The numerically distinct permutiple strings and permutiple examples formed from Eulerian circuits beginning and ending with the zero state are displayed in Table \ref{2_4_union_2}.

\begin{center}
\begin{footnotesize}
 \begin{tabular}{|c|l|l|}
 \hline
Multi-Image Union & Permutiple String & Example\\\hline
$\Delta_3 \uplus \Delta_3$ & $(2,1)(0,2)(1,0)(2,1)(0,2)(1,0)$ &$(1,0,2,1,0,2)_4=2\cdot(0,2,1,0,2,1)_4$\\
& $(0,2)(1,0)(2,1)(0,2)(1,0)(2,1)$ & $(2,1,0,2,1,0)_4=2\cdot(1,0,2,1,0,2)_4$\\
& $ (2,1)(0,2)(1,0)(0,2)(1,0)(2,1)$ & $(2,1,0,1,0,2)_4=2\cdot(1,0,2,0,2,1)_4$\\
& $ (2,1)(2,1)(0,2)(1,0)(0,2)(1,0)$ & $(1,0,1,0,2,2)_4=2\cdot(0,2,0,2,1,1)_4$\\
& $(0,2)(1,0)(0,2)(1,0)(2,1)(2,1)$ & $(2,2,1,0,1,0)_4=2\cdot(1,1,0,2,0,2)_4$\\
& $(0,2)(1,0)(2,1)(2,1)(0,2)(1,0)$ & $(1,0,2,2,1,0)_4=2\cdot(0,2,1,1,0,2)_4$\\
\hline
\end{tabular}
\end{footnotesize}
\captionof{table}{Numerically distinct, 6-digit $(2,4)$-permutiples formed from the multigraph union $\Delta_3 \uplus \Delta_3.$}
\label{2_4_union_2}
\end{center}

As already mentioned, since all of the cycle multi-images have equal indegree and outdegree at each vertex, any multigraph union of them will also share this property. It follows that all nontrivial $(2,4)$-permutiple strings can be formed from multigraph unions of cycle multi-images which contain at least one copy of $\Delta_3,$ $\Delta_4,$ or $\Delta_5.$
\end{example}

\begin{example}\label{example_6} We now continue with Example \ref{example_4}. The cycles of the $(3,4)$-mother graph and their corresponding cycle multi-images are seen in Table \ref{3_4_mi}.
\begin{center}
\begin{scriptsize}
\begin{tabular}{|c|c|l|c|}
\hline
& {\bf Mother-Graph Cycle} & {\bf Cycle Multi-Image}&\\\hline

$C_0$ &
\begin{tikzpicture} \tikzset{edge/.style = {->,> = latex'}} \tikzset{vertex/.style = {shape=circle,draw,minimum size=1.5em}} [xscale=2, yscale=2, auto=left,every node/.style={circle,fill=blue!20}]
\node[vertex] (n0) at (7,5) {$0$};
\draw[edge](n0) to[in=30,out=-30, loop, style={min distance=5mm}] (n0);
\end{tikzpicture}

&

\begin{tikzpicture} \tikzset{edge/.style = {->,> = latex'}} \tikzset{vertex/.style = {shape=circle,draw,minimum size=1.5em}} [xscale=2, yscale=2, auto=left,every node/.style={circle,fill=blue!20}]
\node[vertex,initial,accepting] (n0) at (0,5) {$0$};
\node[vertex,white] (n1) at (2,5) {$1$};
\node[vertex,white] (n2) at (4,5) {$2$};
\draw[edge](n0) to[in=150,out=90, loop, style={min distance=5mm}] node[pos=0.25,right] {$(0,0)$} (n0);
\end{tikzpicture}
&
$\Delta_0$
\\\hline

$C_1$ &
\begin{tikzpicture} \tikzset{edge/.style = {->,> = latex'}} \tikzset{vertex/.style = {shape=circle,draw,minimum size=1.5em}} [xscale=2, yscale=2, auto=left,every node/.style={circle,fill=blue!20}]
\node[vertex] (n1) at (6.000001326794896,5.99999999999912) {$1$};
\draw[edge](n1) to[in=120,out=60, loop, style={min distance=5mm}] (n1);
\end{tikzpicture}

&

\begin{tikzpicture} \tikzset{edge/.style = {->,> = latex'}} \tikzset{vertex/.style = {shape=circle,draw,minimum size=1.5em}} [xscale=2, yscale=2, auto=left,every node/.style={circle,fill=blue!20}]
\color{white}\node[vertex,initial,accepting] (n0) at (0,5) {$0$};
\node[vertex,black] (n1) at (2,5) {$1$};
\node[vertex,black] (n2) at (4,5) {$2$};
\draw[edge, black, bend left=10](n2) edge node[below] {$(1,1)$} (n1);
\end{tikzpicture}
&
$\Delta_1$
\\\hline

$C_2$ &
\begin{tikzpicture} \tikzset{edge/.style = {->,> = latex'}} \tikzset{vertex/.style = {shape=circle,draw,minimum size=1.5em}} [xscale=2, yscale=2, auto=left,every node/.style={circle,fill=blue!20}]
\node[vertex] (n2) at (5.00000000000352,5.000002653589793) {$2$};
\draw[edge](n2) to[in=200,out=140, loop, style={min distance=5mm}] (n2);
\end{tikzpicture}

&

\begin{tikzpicture} \tikzset{edge/.style = {->,> = latex'}} \tikzset{vertex/.style = {shape=circle,draw,minimum size=1.5em}} [xscale=2, yscale=2, auto=left,every node/.style={circle,fill=blue!20}]
\node[vertex,initial,accepting] (n0) at (0,5) {$0$};
\node[vertex] (n1) at (2,5) {$1$};
\node[vertex,white] (n2) at (4,5) {$2$};
\draw[edge, bend right=10](n0) edge node[below] {$(2,2)$} (n1);
\end{tikzpicture}
&
$\Delta_2$
\\\hline

$C_3$ &
\begin{tikzpicture} \tikzset{edge/.style = {->,> = latex'}} \tikzset{vertex/.style = {shape=circle,draw,minimum size=1.5em}} [xscale=2, yscale=2, auto=left,every node/.style={circle,fill=blue!20}]
 \node[vertex] (n3) at (5.99999203923062,4.0000000000158433) {$3$};
\draw[edge](n3) to[in=300,out=240, loop, style={min distance=5mm}] (n3);
 \end{tikzpicture}

&
\color{white}
\begin{tikzpicture} \tikzset{edge/.style = {->,> = latex'}} \tikzset{vertex/.style = {shape=circle,draw,minimum size=1.5em}} [xscale=2, yscale=2, auto=left,every node/.style={circle,fill=blue!20}]
\node[vertex,initial,accepting] (n0) at (0,5) {$0$};
\node[vertex] (n1) at (2,5) {$1$};
\node[vertex,black] (n2) at (4,5) {$2$};
 \draw[edge,black](n2) to[in=-30,out=-90, loop, style={min distance=5mm}] node[pos=0.75,right] {$(3,3)$} (n2);
\end{tikzpicture}
&
$\Delta_3$
\\\hline

$C_4$ &
\begin{tikzpicture} \tikzset{edge/.style = {->,> = latex'}} \tikzset{vertex/.style = {shape=circle,draw,minimum size=1.5em}} [xscale=2, yscale=2, auto=left,every node/.style={circle,fill=blue!20}]
\node[vertex] (n0) at (7,5) {$0$};
\node[vertex] (n1) at (6.000001326794896,5.99999999999912) {$1$};
\draw[edge, bend right=10](n1) to (n0);
\draw[edge, bend right=10](n0) to (n1);
\end{tikzpicture}

&

\begin{tikzpicture} \tikzset{edge/.style = {->,> = latex'}} \tikzset{vertex/.style = {shape=circle,draw,minimum size=1.5em}} [xscale=2, yscale=2, auto=left,every node/.style={circle,fill=blue!20}]
\node[vertex,initial,accepting] (n0) at (0,5) {$0$};
\node[vertex] (n1) at (2,5) {$1$};
\node[vertex,white] (n2) at (4,5) {$2$};
\draw[edge](n1) to[in=120,out=60, loop, style={min distance=5mm}] node[above] {$(0,1)$} (n1);
\draw[edge, bend right=10](n1) edge node[above] {$(1,0)$} (n0);
\end{tikzpicture}
&
$\Delta_4$
\\\hline

$C_5$ &
\begin{tikzpicture} \tikzset{edge/.style = {->,> = latex'}} \tikzset{vertex/.style = {shape=circle,draw,minimum size=1.5em}} [xscale=2, yscale=2, auto=left,every node/.style={circle,fill=blue!20}]
\node[vertex] (n0) at (7,5) {$0$};
\node[vertex] (n2) at (5.00000000000352,5.000002653589793) {$2$};
\draw[edge, bend right=10](n0) to (n2);
\draw[edge, bend right=10](n2) to (n0);
\end{tikzpicture}

&

\begin{tikzpicture} \tikzset{edge/.style = {->,> = latex'}} \tikzset{vertex/.style = {shape=circle,draw,minimum size=1.5em}} [xscale=2, yscale=2, auto=left,every node/.style={circle,fill=blue!20}]
\node[vertex,initial,accepting] (n0) at (0,5) {$0$};
\node[vertex,white] (n1) at (2,5) {$1$};
\node[vertex] (n2) at (4,5) {$2$};
\draw[edge, bend right=10](n2) edge node[above] {$(2,0)$} (n0);
\draw[edge](n2) to[in=90,out=30, loop, style={min distance=5mm}] node[pos=0.25,right] {$(0,2)$} (n2);
\end{tikzpicture}
&
$\Delta_5$
\\\hline

$C_6$ &
\begin{tikzpicture} \tikzset{edge/.style = {->,> = latex'}} \tikzset{vertex/.style = {shape=circle,draw,minimum size=1.5em}} [xscale=2, yscale=2, auto=left,every node/.style={circle,fill=blue!20}]
\node[vertex] (n1) at (0,0) {$1$};
\node[vertex] (n3) at (0,1.5) {$3$};
\draw[edge, bend right=10](n3) to (n1);
\draw[edge, bend right=10](n1) to (n3);
\end{tikzpicture}

&

\begin{tikzpicture} \tikzset{edge/.style = {->,> = latex'}} \tikzset{vertex/.style = {shape=circle,draw,minimum size=1.5em}} [xscale=2, yscale=2, auto=left,every node/.style={circle,fill=blue!20}]
\node[vertex,initial,accepting] (n0) at (0,5) {$0$};
\node[vertex,white] (n1) at (2,5) {$1$};
\node[vertex] (n2) at (4,5) {$2$};
\draw[edge, bend right=10](n0) edge node[below] {$(1,3)$} (n2);
\draw[edge](n0) to[in=270,out=210, loop, style={min distance=5mm}] node[below] {$(3,1)$} (n0);
\end{tikzpicture}
&
$\Delta_6$
\\\hline

$C_7$ &
\begin{tikzpicture} \tikzset{edge/.style = {->,> = latex'}} \tikzset{vertex/.style = {shape=circle,draw,minimum size=1.5em}} [xscale=2, yscale=2, auto=left,every node/.style={circle,fill=blue!20}]
\node[vertex] (n2) at (5.00000000000352,5.000002653589793) {$2$};
\node[vertex] (n3) at (5.99999203923062,4.0000000000158433) {$3$};
\draw[edge, bend right=10](n2) to (n3);
\draw[edge, bend right=10](n3) to (n2);
\end{tikzpicture}

&
\color{white}
\begin{tikzpicture} \tikzset{edge/.style = {->,> = latex'}} \tikzset{vertex/.style = {shape=circle,draw,minimum size=1.5em}} [xscale=2, yscale=2, auto=left,every node/.style={circle,fill=blue!20}]
\node[vertex,initial,accepting] (n0) at (0,5) {$0$};
\node[vertex,black] (n1) at (2,5) {$1$};
\node[vertex,black] (n2) at (4,5) {$2$};
\draw[edge, black, bend left=10](n1) edge node[above] {$(2,3)$} (n2);
\draw[edge, black](n1) to[in=300,out=240, loop, style={min distance=5mm}] node[pos=0.75,right] {$(3,2)$} (n1);
\end{tikzpicture}
&
$\Delta_7$
\\\hline

$C_8$ &
\begin{tikzpicture} \tikzset{edge/.style = {->,> = latex'}} \tikzset{vertex/.style = {shape=circle,draw,minimum size=1.5em}} [xscale=2, yscale=2, auto=left,every node/.style={circle,fill=blue!20}]
\node[vertex] (n0) at (7,5) {$0$};
\node[vertex] (n1) at (6.000001326794896,5.99999999999912) {$1$};
\node[vertex] (n2) at (5.00000000000352,5.000002653589793) {$2$};
\node[vertex] (n3) at (5.99999203923062,4.0000000000158433) {$3$};
\draw[edge](n2) to (n0);
\draw[edge](n0) to (n1);
\draw[edge](n3) to (n2);
\draw[edge](n1) to (n3);
\end{tikzpicture}

&

\begin{tikzpicture} \tikzset{edge/.style = {->,> = latex'}} \tikzset{vertex/.style = {shape=circle,draw,minimum size=1.5em}} [xscale=2, yscale=2, auto=left,every node/.style={circle,fill=blue!20}]
\node[vertex,initial,accepting] (n0) at (0,5) {$0$};
\node[vertex] (n1) at (2,5) {$1$};
\node[vertex] (n2) at (4,5) {$2$};
\draw[edge, bend right=40](n2) edge node[above] {$(2,0)$} (n0);
\draw[edge](n1) to[in=120,out=60, loop, style={min distance=4mm}] node[pos=0.25,right] {$(0,1)$} (n1);
\draw[edge](n1) to[in=300,out=240, loop, style={min distance=4mm}] node[pos=0.25,left] {$(3,2)$} (n1);
\draw[edge, bend right=40](n0) edge node[below] {$(1,3)$} (n2);
\end{tikzpicture}
&
$\Delta_8$
\\\hline

$C_9$ &
\begin{tikzpicture} \tikzset{edge/.style = {->,> = latex'}} \tikzset{vertex/.style = {shape=circle,draw,minimum size=1.5em}} [xscale=2, yscale=2, auto=left,every node/.style={circle,fill=blue!20}]
\node[vertex] (n0) at (7,5) {$0$};
\node[vertex] (n1) at (6.000001326794896,5.99999999999912) {$1$};
\node[vertex] (n2) at (5.00000000000352,5.000002653589793) {$2$};
\node[vertex] (n3) at (5.99999203923062,4.0000000000158433) {$3$};
\draw[edge](n0) to (n2);
\draw[edge](n1) to (n0);
\draw[edge](n2) to (n3);
\draw[edge](n3) to (n1);
\end{tikzpicture}

&

\begin{tikzpicture} \tikzset{edge/.style = {->,> = latex'}} \tikzset{vertex/.style = {shape=circle,draw,minimum size=1.5em}} [xscale=2, yscale=2, auto=left,every node/.style={circle,fill=blue!20}]
\node[vertex,initial,accepting] (n0) at (0,5) {$0$};
\node[vertex] (n1) at (2,5) {$1$};
\node[vertex] (n2) at (4,5) {$2$};
\draw[edge, bend right=10](n1) edge node[above] {$(1,0)$} (n0);
\draw[edge, bend left=10](n1) edge node[above] {$(2,3)$} (n2);
\draw[edge](n0) to[in=270,out=210, loop, style={min distance=5mm}] node[below] {$(3,1)$} (n0);
\draw[edge](n2) to[in=90,out=30, loop, style={min distance=5mm}] node[above] {$(0,2)$} (n2);
\end{tikzpicture}
&
$\Delta_9$
\\\hline

\end{tabular}
\captionof{table}{Cycles of the $(3,4)$-mother graph and corresponding cycle multi-images.}
\label{3_4_mi}
\end{scriptsize}
\end{center}

We may now consider multiset unions of mother-graph cycles whose corresponding cycle multi-image union is $L$-Eulerian. In this way, Table \ref{3_4_mi} describes how to form any $(3,4)$-permutiple string with a suitable number of digits. For instance, the multiset union of mother-graph cycles $C_I=C_1 \uplus C_2 \uplus C_5 \uplus C_6 \uplus C_9$ corresponds to the multi-image union $\Delta_I=\Delta_1 \uplus \Delta_2 \uplus \Delta_5 \uplus \Delta_6 \uplus \Delta_9,$ which is $L$-Eulerian. This multigraph is shown in Figure \ref{3_4_mi_union_1}.

\begin{center}
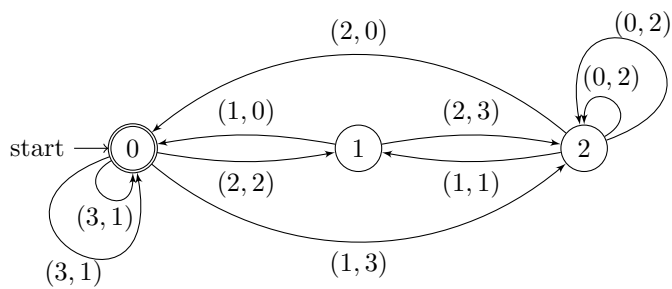

 \begin{tikzpicture} \tikzset{edge/.style = {->,> = latex'}} \tikzset{vertex/.style = {shape=circle,draw,minimum size=1.5em}} [xscale=2, yscale=2, auto=left,every node/.style={circle,fill=blue!20}]
\node[vertex,initial,accepting] (n0) at (0,0) {$0$};
\node[vertex] (n1) at (3,0) {$1$};
\node[vertex] (n2) at (6,0) {$2$};
\draw[edge, bend right=10](n1) edge node[above] {$(1,0)$} (n0);
\draw[edge, bend left=10](n1) edge node[above] {$(2,3)$} (n2);
\draw[edge](n0) to[in=270,out=210, loop, style={min distance=7mm}] node[below] {$(3,1)$} (n0);
\draw[edge](n2) to[in=90,out=30, loop, style={min distance=7mm}] node[above] {$(0,2)$} (n2);

\draw[edge, bend right=40](n0) edge node[below] {$(1,3)$} (n2);
\draw[edge](n0) to[in=280,out=200, loop, style={min distance=23mm}] node[below] {$(3,1)$} (n0);
\draw[edge, bend right=40](n2) edge node[above] {$(2,0)$} (n0);
\draw[edge](n2) to[in=100,out=20, loop, style={min distance=23mm}] node[above] {$(0,2)$} (n2);
\draw[edge, bend right=10](n0) edge node[below] {$(2,2)$} (n1);
\draw[edge, black, bend left=10](n2) edge node[below] {$(1,1)$} (n1);
\end{tikzpicture}
\captionof{figure}{The multi-image union $\Delta_I=\Delta_1 \uplus \Delta_2 \uplus \Delta_5 \uplus \Delta_6 \uplus \Delta_9$ corresponding to the multiset union $C_I=C_1 \uplus C_2 \uplus C_5 \uplus C_6 \uplus C_9$ of mother-graph cycles.}
\label{3_4_mi_union_1}
\end{center}
We may use Figure \ref{3_4_mi_union_1} to form permutiple strings by traversing Eulerian circuits on $\Delta_I$ which begin and end with the zero state, such as
\[
(1,3)(0,2)(1,1)(1,0)(3,1)(2,2)(2,3)(0,2)(2,0)(3,1),
\]
which yields the permutiple
$(3,2,0,2,2,3,1,1,0,1)_4=3\cdot(1,0,2,3,2,1,0,1,2,3)_4.$
\end{example}

For our final example, we apply the above techniques to the $(4,10)$-permutiple class $C$ whose graph $G_C$ is featured in Figures \ref{conj_class_graph} and \ref{4_10_mg}.

\begin{example}\label{example_7}
 We begin by examining the multi-images of the cycles of $G_C.$

 \begin{center}
\begin{scriptsize}
\begin{tabular}{|c|c|l|c|}
\hline
& {\bf Cycle of $G_C$} & {\bf Cycle  Multi-Image}&\\\hline

$C_0$ &
\begin{tikzpicture} \tikzset{edge/.style = {->,> = latex'}} \tikzset{vertex/.style = {shape=circle,draw,minimum size=1.5em}} [xscale=2, yscale=2, auto=left,every node/.style={circle,fill=blue!20}]
\node[vertex] (n1) at (0,0) {$9$};
\draw[edge](n1) to[in=0,out=-80, loop, style={min distance=7mm}] (n1);
\end{tikzpicture}

&

\begin{tikzpicture}
\tikzset{edge/.style = {->,> = latex'}}
\tikzset{vertex/.style = {shape=circle,draw,minimum size=1.5em}}
[xscale=2, yscale=2, auto=left,every node/.style={circle,fill=blue!20}]
\color{white}\node[vertex,accepting,initial,white] (n0) at (0,5) {$0$};
\color{black}\node[vertex] (n1) at (3,5) {$3$};

\color{black}\draw[edge](n1) to[in=130,out=50, loop, style={min distance=7mm}] node[pos=0.25,right] {$(9,9)$} (n1);
\end{tikzpicture}
&
$\Delta_0$
\\\hline

$C_1$ &
\begin{tikzpicture}
\tikzset{edge/.style = {->,> = latex'}}
\tikzset{vertex/.style = {shape=circle,draw,minimum size=1.5em}}
[xscale=3, yscale=3, auto=left,every node/.style={circle,fill=blue!20}]
\node[vertex] (n2) at (0,1.3) {$2$};
\node[vertex] (n8) at (0,0) {$8$};
\draw[edge, bend right=10] (n8) to (n2);
\draw[edge, bend right=10] (n2) to (n8);
\end{tikzpicture}

&

\begin{tikzpicture}
\tikzset{edge/.style = {->,> = latex'}}
\tikzset{vertex/.style = {shape=circle,draw,minimum size=1.5em}}
[xscale=2, yscale=2, auto=left,every node/.style={circle,fill=blue!20}]
\node[vertex,accepting,initial] (n0) at (0,5) {$0$};
\node[vertex] (n1) at (3,5) {$3$};
\draw[edge,bend right=10] (n0) edge node[below] {$(2,8)$} (n1);
\draw[edge](n0) to[in=150,out=90, loop, style={min distance=10mm}] node[pos=0.75,left] {$(8,2)$} (n0);
\end{tikzpicture}
&
$\Delta_1$
\\\hline

$C_2$ &
\begin{tikzpicture}
\tikzset{edge/.style = {->,> = latex'}}
\tikzset{vertex/.style = {shape=circle,draw,minimum size=1.5em}}
[xscale=3, yscale=3, auto=left,every node/.style={circle,fill=blue!20}]
\node[vertex] (n1) at (1,1.1) {$1$};
\node[vertex] (n7) at (0,0) {$7$};
\draw[edge, bend right=10] (n7) to (n1);
\draw[edge, bend right=10] (n1) to (n7);
\end{tikzpicture}

&

\begin{tikzpicture}
\tikzset{edge/.style = {->,> = latex'}}
\tikzset{vertex/.style = {shape=circle,draw,minimum size=1.5em}}
[xscale=2, yscale=2, auto=left,every node/.style={circle,fill=blue!20}]
\node[vertex,accepting,initial] (n0) at (0,5) {$0$};
\node[vertex] (n1) at (3,5) {$3$};
\draw[edge, bend right=10] (n1) edge node[above] {$(7,1)$} (n0);
\draw[edge](n1) to[in=90,out=30, loop, style={min distance=10mm}] node[pos=0.25,right] {$(1,7)$} (n1);
\end{tikzpicture}
&
$\Delta_2$
\\\hline
\end{tabular}
\end{scriptsize}
\captionof{table}{Cycles of $G_C$ and their corresponding cycle multi-images.}\label{4_10_mi}
\end{center}
As shown in Example \ref{example_2}, the multiset union
\[
C_0 \uplus C_1\uplus C_1 \uplus C_2 \uplus C_2
=\{(9,9),(8,2),(8,2),(2,8),(2,8),(7,1),(7,1),(1,7),(1,7)\}
\]
may be ordered into permutiple strings. We verify this by examining the multi-image union $\Delta_0 \uplus \Delta_1 \uplus \Delta_1 \uplus \Delta_2 \uplus \Delta_2$ corresponding to the above multiset union of cycles, shown in Figure \ref{4_10_mi_union_1}.

\begin{center}
\begin{tikzpicture}
\tikzset{edge/.style = {->,> = latex'}}
\tikzset{vertex/.style = {shape=circle,draw,minimum size=1.5em}}
[xscale=2, yscale=2, auto=left,every node/.style={circle,fill=blue!20}]
\node[vertex,accepting,initial] (n0) at (0,0) {$0$};
\node[vertex] (n1) at (4,0) {$3$};
\draw[edge,bend right=10] (n0) edge node[below] {$(2,8)$} (n1);

\draw[edge,bend right=50] (n0) edge node[below] {$(2,8)$} (n1);

\draw[edge, bend right=10] (n1) edge node[above] {$(7,1)$} (n0);

\draw[edge, bend right=50] (n1) edge node[above] {$(7,1)$} (n0);

\draw[edge](n0) to[in=140,out=80, loop, style={min distance=7mm}] node[above] {$(8,2)$} (n0);
\draw[edge](n0) to[in=160,out=60, loop, style={min distance=25mm}] node[above] {$(8,2)$} (n0);

\draw[edge](n1) to[in=100,out=40, loop, style={min distance=7mm}] node[above] {$(1,7)$} (n1);
\draw[edge](n1) to[in=120,out=20, loop, style={min distance=25mm}] node[above] {$(1,7)$} (n1);

\draw[edge](n1) to[in=-40,out=-100, loop, style={min distance=7mm}] node[below] {$(9,9)$} (n1);

\end{tikzpicture}
\captionof{figure}{The multi-image union $\Delta_I=\Delta_0 \uplus \Delta_1 \uplus \Delta_1 \uplus \Delta_2 \uplus \Delta_2$ corresponding to the multiset union $C_I=C_0 \uplus C_1\uplus C_1 \uplus C_2 \uplus C_2$ of mother-graph cycles.}
\label{4_10_mi_union_1}
\end{center}
Since the multi-image union is $L$-Eulerian, we see, by Corollary \ref{indegree_outdegree}, that we may order the multiset union $C_0 \uplus C_1\uplus C_1 \uplus C_2 \uplus C_2$ into permutiple strings by traversing Eulerian circuits beginning and ending with the zero state on $\Delta_0 \uplus \Delta_1 \uplus \Delta_1 \uplus \Delta_2 \uplus \Delta_2.$ Two examples of these can be found in Example \ref{example_2}.

On the other hand, as claimed both in Example \ref{example_2} and in \cite{holt_6}, the multiset union $C_1\uplus C_1 \uplus C_2$ cannot be ordered into permutiple strings. Again, we examine the corresponding multigraph union of multi-images $\Delta_1 \uplus \Delta_1 \uplus \Delta_2.$

\begin{center}
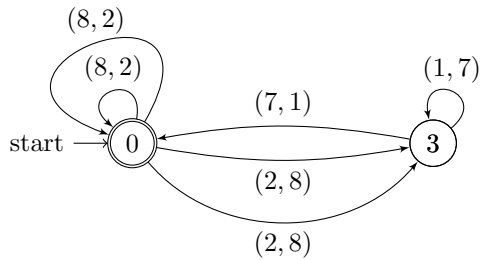

\begin{tikzpicture}
\tikzset{edge/.style = {->,> = latex'}}
\tikzset{vertex/.style = {shape=circle,draw,minimum size=1.5em}}
[xscale=2, yscale=2, auto=left,every node/.style={circle,fill=blue!20}]
\node[vertex,accepting,initial] (n0) at (0,0) {$0$};
\node[vertex] (n1) at (4,0) {$3$};
\draw[edge,bend right=10] (n0) edge node[below] {$(2,8)$} (n1);

\draw[edge,bend right=50] (n0) edge node[below] {$(2,8)$} (n1);

\draw[edge, bend right=10] (n1) edge node[above] {$(7,1)$} (n0);

\draw[edge](n0) to[in=140,out=80, loop, style={min distance=7mm}] node[above] {$(8,2)$} (n0);
\draw[edge](n0) to[in=160,out=60, loop, style={min distance=25mm}] node[above] {$(8,2)$} (n0);

\draw[edge](n1) to[in=100,out=40, loop, style={min distance=7mm}] node[above] {$(1,7)$} (n1);

\end{tikzpicture}
\captionof{figure}{The multi-image union $\Delta_I=\Delta_1 \uplus \Delta_1 \uplus \Delta_2$ corresponding to the multiset union $C_I=C_1\uplus C_1 \uplus C_2$ of mother-graph cycles.}
\label{4_10_mi_union_2}
\end{center}
Although $\Delta_1 \uplus \Delta_1 \uplus \Delta_2$ contains the zero state and is strongly connected, we see that the indegrees and outdegrees at both vertices are unequal. Thus, the multigraph is not $L$-Eulerian, and the conditions of Corollary \ref{indegree_outdegree} are not met. Consequently, $C_1\uplus C_1 \uplus C_2$ cannot be ordered into a permutiple string.

\end{example}

\section{Summary, Conclusions, and Future Work}

In \cite{holt_6}, the question is raised regarding sufficient conditions which allow for the formation of permutiple strings. In this paper, we have provided such a condition, which leads to the following equivalence: a multiset union of mother-graph cycles can be ordered into a permutiple string if and only if the multigraph union of corresponding cycle multi-images is $L$-Eulerian (that is, if it contains the zero state, is strongly connected, and the indegree is equal to the outdegree at each vertex).

From the above considerations, we see that counting permutiples with a fixed base, multiplier, and length, and a fixed multiset of digits, is reduced to counting Eulerian circuits of a particular strongly-connected multi-image union containing the zero state having equal indegree and outdegree at each vertex. Counting Eulerian circuits of directed graphs may be accomplished in polynomial time using the BEST algorithm \cite{best}. Multigraph variations also exist \cite{farrell}. That said, if we lift the constraint of a fixed multiset of digits, then counting permutiples of a fixed base, multiplier, and length $\ell,$ becomes a problem of finding all multi-image unions which are $L$-Eulerian and have exactly $\ell$ multi-edges. We are uncertain of the difficulty of this latter problem, and it presents an area for further study.

Adding to the difficulties presented above, for larger bases and multipliers, the number of cycle multi-images increases rapidly. For example, as stated in \cite{holt_5}, there are 986 cycles of the $(4,10)$-mother graph. Thus, considering $(4,10)$-permutiples of a fixed length $\ell,$ there are a very large number of possible cycle multi-image unions to consider. Filtering these possibilities down to those which are $L$-Eulerian appears to be a difficult task, and we leave it to future efforts.

As mentioned in \cite {holt_5}, the methods presented here may be applied to the more specific {\it palintiple} problem where $\sigma$ is the reversal permutation. Finding palintiple numbers means that we only need to examine the cycle multi-images of mother-graph $1$- and $2$-cycles. However, finding palintiple strings,  which have the very particular form
\[
(d_{0},d_{k})(d_{1},d_{k-1}) \cdots (d_{j},d_{k-j}) \cdots (d_{k-j},d_{j}) \cdots (d_{k-1},d_{1})(d_{k},d_{0}),
\]
seems to be more difficult than one would expect; these are not obvious when examining the Hoey-Sloane graph or the individual cycle multi-images. This is yet another open question which could advance our understanding of palintiple numbers.

In contrast to the above, for the very specific case of palintiple numbers when $n+1$ divides $b,$ Sloane \cite{sloane} shows that the number of $k+1$-digit $(n,b)$-palintiples, for $k\geq 3,$ is $F_{\lfloor \frac{k+1}{2} \rfloor-1},$ where $F_{m}$ is the $m^{th}$ Fibonacci number. Restating this result in terms of the permutiple class $C$ considered in Example \ref{example_7}, there are $F_{\lfloor \frac{k+1}{2} \rfloor-1}$ palintiple strings of length $k+1.$ Other significant integer sequences which might arise when considering the general permutiple problem presents another open invitation for further investigation.

We conclude by presenting some questions relating to the ``derived-permutiple'' problem mentioned by \cite{holt_3,holt_4, holt_5}. A base-$b$ permutiple $(d_k,\ldots,d_0)_b$ is {\it derived} if its carries $c_j$, not including the trivial carry  $c_0=0,$ are the digits of a base-$n$ permutiple $(c_k,c_{k-1},\ldots,c_1)_n.$ This phenomenon is illustrated by the $(6,12)$-permutiple $(10,3,5,1,8,6)_{12}=6\cdot(1,8,6,10,3,5)_{12}$ whose non-trivial carries are the digits of the $(2,6)$-palintiple $(4,3,5,1,2)_6=2 \cdot (2,1,5,3,4)_6.$ If we allow for leading digits which are zero, we have another pair of examples related to one another: considering the $(2,3)$-palintiple  $(2,1,0,1)_3 = 2 \cdot (1,0,1,2)_3,$ the $(3,6)$-permutiple $(2,1,0,4,3)_6 = 3 \cdot (0,4,2,1,3)_6$ has the carry vector $(2,1,0,1,0),$ and the $(6,12)$-permutiple
$(8, 1, 6, 4, 3, 0 )_{12} = 6 \cdot (1, 4, 3, 0, 8, 6)_{12}$ has the carry vector $(2,1,0,4,3,0),$ whose non-trivial entries are the digits of the previous example. From the multigraph perspective developed here, the derived-permutiple problem may be posed as the following: given a base-$n$ permutiple $(c_k,c_{k-1},\ldots,c_1)_n,$  find an $(n,b)$-Hoey-Sloane multigraph for which the circuit $(0,c_1, \ldots, c_{k-1},c_k,0)$ may be traversed by using a collection of inputs which is a multiset union of $(n,b)$-mother-graph cycles. The above examples naturally lead to several questions: Given $(d_k,\ldots,d_0)_b,$ derived from $(c_k,c_{k-1},\ldots,c_1)_n,$ under what conditions can we find a $(b,\hat{b})$-permutiple derived from $(d_k,\ldots,d_0)_b?$ Are there cases for which this process may be continued indefinitely? Can entire permutiple classes (see Definition \ref{perm_class}) be constructed from others? What larger patterns might exist between these classes?

Other questions regarding the general problem may be found in \cite{holt_3, holt_4}.

\vskip 20pt\noindent {\bf Acknowledgement.} We are grateful to the anonymous referee for their time, as well as their valued suggestions and corrections which greatly enhanced the quality of this work.

\end{document}